\newtheorem{thm}{Theorem}[subsection]
\newtheorem{lem}[thm]{Lemma}
\newtheorem{prop}[thm]{Proposition}
\newtheorem{cor}[thm]{Corollary}
\newtheorem{conj}[thm]{Conjecture}
\theoremstyle{definition}
\newtheorem{defi}[thm]{Definition}
\newtheorem{rem}[thm]{Remark}
\newtheorem{ex}[thm]{Example}
\numberwithin{equation}{section}
\begin{document}
\author{Hideya Watanabe}
\address{(H. Watanabe) Osaka Central Advanced Mathematical Institute, Osaka Metropolitan University, Osaka, 558-8585, Japan}
\email{watanabehideya@gmail.com}
\subjclass[2020]{Primary~17B37; Secondary~17B10}
\date{\today}

\title{Stability of $\imath$canonical bases of locally finite type}

\maketitle

\begin{abstract}
  We prove the stability conjecture of $\imath$canonical bases, which was raised by Huanchen Bao and Weiqiang Wang in 2016, for all locally finite types.
  To this end, we characterize the trivial module over the $\imath$quantum groups of such type at $q = \infty$.
  This result can be seen as a very restrictive version of the $\imath$crystal base theory for locally finite types.
\end{abstract}

\section{Introduction}
\subsection{Quantum symmetric pairs}
A quantum symmetric pair is a pair $(\mathbf{U}, \mathbf{U}^\imath)$ of a quantum group (also known as a quantized enveloping algebra) $\mathbf{U}$, introduced by Drinfeld \cite{Dr85} and Jimbo \cite{Jim85}, and an $\imath$quantum group (also known as a quantum symmetric pair coideal subalgebra) $\mathbf{U}^\imath$, uniformly formulated by Letzter \cite{Let99} for finite types and by Kolb \cite{Kol14} for Kac-Moody types.
The $\imath$quantum group $\mathbf{U}^\imath$ is constructed from an admissible pair, which is a generalization of a Satake diagram.
When the admissible pair is of diagonal type, the $\imath$quantum group $\mathbf{U}^\imath$ is an ordinary quantum group.
In this sense, the $\imath$quantum groups are generalizations of the quantum groups.

Based on this point of view, many important constructions for the quantum groups have been generalized to the $\imath$quantum groups in natural, but often nontrivial, ways ({\it cf.}\ \cite{Wan21}).
One of them is Bao-Wang's construction of the $\imath$canonical basis of the modified form of $\mathbf{U}^\imath$, which generalizes Lusztig's construction of the canonical basis of the modified form of $\mathbf{U}$.

\subsection{Lusztig's construction}
The canonical basis $\dot{\mathbf{B}}$ of the modified form $\dot{\mathbf{U}}$ of the quantum group $\mathbf{U}$ was constructed by Lusztig \cite{Lus92}.
It is a projective limit of a certain projective system of $\mathbf{U}$-modules.
To be more specific, let $X^+$ denote the set of dominant weights for the quantum group $\mathbf{U}$.
For each $\lambda \in X^+$, let $V(\lambda)$ and $V(-\lambda)$ denote the integrable highest weight module of highest weight $\lambda$ with highest weight vector $v_\lambda$ and the integrable lowest weight module of lowest weight $-\lambda$ with lowest weight vector $v_{-\lambda}$, respectively.
In \cite{Lus92}, Lusztig defined the canonical basis $\mathbf{B}(-\lambda, \mu)$ of the tensor product $V(-\lambda, \mu) := V(-\lambda) \otimes V(\mu)$ for each $\lambda,\mu \in X^+$.
Then, he showed that these canonical bases are stable under the $\mathbf{U}$-module homomorphisms
\[
  \pi_{\lambda,\mu;\nu}: V(-(\lambda+\nu), \mu+\nu) \rightarrow V(-\lambda, \mu)
\]
sending $v_{-(\lambda + \nu)} \otimes v_{\mu+\nu}$ to $v_{-\lambda} \otimes v_\mu$ for each $\lambda,\mu,\nu \in X^+$.
Namely, for each $b \in \mathbf{B}(-(\lambda+\nu), \mu+\nu)$, the image $\pi_{\lambda,\mu;\nu}(b)$ is either an element of $\mathbf{B}(-\lambda, \mu)$ or zero, and the kernel of $\pi_{\lambda,\mu;\nu}$ is spanned by a subset of $\mathbf{B}(-(\lambda+\nu), \mu+\nu)$.
In other words, the homomorphisms $\pi_{\lambda,\mu;\nu}$ are based.

Also, he proved that $\dot{\mathbf{B}}$ is stable under the $\mathbf{U}$-module homomorphisms
\[
  \dot{\pi}_{\lambda,\mu;\nu}: \dot{\mathbf{U}} \rightarrow V(-(\lambda+\nu), \mu+\nu);\ x \mapsto x \cdot (v_{-(\lambda+\nu)} \otimes v_{\mu+\nu})
\]
for all $\lambda,\mu,\nu \in X^+$.

\subsection{Bao-Wang's construction}
Bao and Wang generalized Lusztig's construction to $\imath$quantum groups for finite types in \cite{BW18} and for Kac-Moody types in \cite{BW21}.
Namely, they defined $\mathbf{U}^\imath$-modules $V^\imath(\lambda,\mu;\nu)$ and their $\imath$canonical bases $\mathbf{B}^\imath(\lambda,\mu;\nu)$ for $\lambda,\mu,\nu \in X^+$, which are counterparts of $V(-(\lambda+\nu), \mu+\nu)$ with canonical bases $\mathbf{B}(-(\lambda+\nu),\mu+\nu)$.
Also, they defined $\mathbf{U}^\imath$-module homomorphisms
\[
  \pi^\imath_{\lambda,\mu;\nu}: V^\imath(\lambda,\mu;\nu) \rightarrow V^\imath(\lambda,\mu;0)
\]
and
\[
  \dot{\pi}^\imath_{\lambda,\mu;\nu}: \dot{\mathbf{U}}^\imath \rightarrow V^\imath(\lambda,\mu;\nu)
\]
for each $\lambda,\mu,\nu \in X^+$, where $\dot{\mathbf{U}}^\imath$ denotes the modified form of $\mathbf{U}^\imath$.
Then, they showed that the $\imath$canonical bases $\mathbf{B}^\imath(\lambda,\mu;\nu)$ are \emph{asymptotically} stable under the homomorphisms $\pi^\imath_{\lambda,\mu;\nu}$, and constructed the $\imath$canonical basis $\dot{\mathbf{B}}^\imath$ of $\dot{\mathbf{U}}^\imath$, which, together with $\dot{\pi}^\imath_{\lambda,\mu;\nu}$, is an \emph{asymptotical} limit of the projective system $V^\imath(\lambda,\mu;\nu)$ with $\pi^\imath_{\lambda,\mu;\nu}$.

\subsection{Stability conjecture}
Bao-Wang's construction of the $\imath$canonical basis of $\dot{\mathbf{U}}^\imath$ is quite natural, and the $\imath$canonical bases of $\dot{\mathbf{U}}^\imath$ and $V^\imath(\lambda,\mu;\nu)$ have many good properties as the canonical bases of $\dot{\mathbf{U}}$ and $V(-(\lambda+\nu), \mu+\nu)$.
However, $\imath$canonical bases are not necessarily stable under the homomorphisms $\pi^\imath_{\lambda,\mu;\nu}$ and $\dot{\pi}^\imath_{\lambda,\mu;\nu}$ in general.
Bao and Wang conjectured that if the parameters for $\mathbf{U}^\imath$ are chosen appropriately, then the $\imath$canonical bases are stable \cite[Remark 6.18]{BW18}.
We call it the \emph{stability conjecture}.

The stability of $\imath$canonical bases are closely related to the study of categorifications of $\imath$quantum groups, quantum symmetric pairs at roots of unity, and symmetric subgroup schemes (see \cite{BSWW18}, \cite{BSa21}, \cite{BS22}, and \cite{BWW23a}).

The stability conjecture has been proved to be true when the admissible pair is of locally finite type and quasi-split in \cite{W21}, and when the admissible pair is irreducible, of finite type, and of real rank $1$ in \cite{W23}.
In both of the two papers, the final step in the proof of the stability conjecture is to show that certain $\mathbf{U}$-modules contain a $\mathbf{U}^\imath$-submodule isomorphic to the trivial module generated by a vector which coincides with a highest weight vector at $q = \infty$.
In \cite{W21}, this claim was proved by means of the $\imath$crystal base theory, which is a $\mathbf{U}^\imath$-analogue of the crystal base theory for $\mathbf{U}$, while in \cite{W23}, by direct calculation based on the classification of the admissible pairs under consideration.

The $\imath$crystal base theory has not been established beyond quasi-split locally finite types.
Hence, the strategy of \cite{W21} is not applicable for a general type.
On the other hand, the strategy of \cite{W23} is valid for all types.
However, we cannot complete the proof in this way because there are infinitely many types of admissible pairs.

\subsection{Results}
In this paper, we prove the stability conjecture when the admissible pair is of locally finite type.
Such admissible pairs include all and much more ones treated in \cite{W21} or \cite{W23}.

As stated above, in order to prove the stability conjecture, we only need to understand the trivial module at $q = \infty$.
To this end, we characterize the trivial $\mathbf{U}^\imath$-submodules in each integrable highest weight $\mathbf{U}$-module $V(\lambda)$ in terms of its (ordinary) crystal base.
This can be seen as a very restrictive version of the $\imath$crystal base theory for locally finite types by the following reason.

One of the goals of the $\imath$crystal base theory is to understand various $\mathbf{U}^\imath$-modules from their behaviours at $q = \infty$, just like the ordinary crystal base theory characterizes each irreducible component in the $\mathbf{U}$-modules of the form $V(-\lambda, \mu)$ at $q = \infty$ in terms of their crystal bases, which can be deduced from the crystal bases of $V(-\lambda, \mu)$ as $\mathbf{U} \otimes \mathbf{U}$-modules by the tensor product rule.
This goal has been achieved for all diagonal types and for types $A\mathrm{I}_1$, $A\mathrm{III}_2$, and $A\mathrm{IV}_2$ in \cite{W21}, and for type $A\mathrm{I}$ of arbitrary rank in \cite{W23a} and \cite{W21b}.
The characterization of the trivial submodules mentioned above partially achieve the goal for all locally finite types.

\subsection{Organization}
This paper is organized as follows.
In Sections \ref{sect: qg and cb} and \ref{sect: qsp and cb}, we recall basic results and fix our notations concerning quantum groups and quantum symmetric pairs, respectively.
The stability conjecture is precisely stated in Subsection \ref{subsect: stab conj}.
Then, we prove the stability conjecture for locally finite types in Section \ref{sect: proof of stab conj}.

\subsection{Acknowledgements}
This work was supported by JSPS KAKENHI Grant Number JP20K14286.

\section{Quantum groups and canonical bases}\label{sect: qg and cb}
In this section, we recall basic results and fix our notations concerning quantum groups and canonical bases of various modules.

\subsection{Quantum Groups}
Let $I$ be a Cartan datum, and $(Y, X, \langle , \rangle, \Pi^\vee, \Pi)$ a $Y$-regular and $X$-regular root datum of type $I$ in the sense of \cite[Sections 1.1 and 2.2]{Lus93}.
Namely, $A = (a_{i,j})_{i,j \in I}$ is a symmetrizable generalized Cartan matrix, $Y$ and $X$ are free abelian groups with perfect bilinear pairing
\[
  \langle ,  \rangle: Y \times X \rightarrow \mathbb{Z},
\]
$\Pi^\vee = \{ h_i \mid i \in I \} \subset Y$ and $\Pi = \{ \alpha_i \mid i \in I \} \subset X$ are linearly independent subsets such that
\[
  \langle h_i, \alpha_j \rangle = a_{i,j}\ \text{ for all } i,j \in I.
\]

Let us fix an $I$-tuple $(d_i)_{i \in I}$ of positive integers in a way such that
\[
  d_i a_{i,j} = d_j a_{j,i}\ \text{ for all } i,j \in I.
\]
For each $i \in I$ and $n \in \mathbb{Z}_{\geq 0}$, set
\[
  q_i := q^{d_i}, \quad [n]_i := \frac{q_i^n-q_i^{-n}}{q_i-q_i^{-1}}, \quad [n]_i! := \prod_{k=1}^n [k]_i.
\]

The quantum group associated with the datum above is the unital associative $\mathbb{Q}(q)$-algebra $\mathbf{U}$ with generators
\[
  \{ E_i, F_i, K_h \mid i \in I,\ h \in Y \}
\]
subject to the following relations for all $i,j \in I$ and $h,h_1,h_2 \in Y$:
\begin{align*}
  \begin{split}
    &K_0 = 1, \\
    &K_{h_1} K_{h_2} = K_{h_1 + h_2}, \\
    &K_h E_i = q^{\langle h, \alpha_i \rangle} E_i K_h, \\
    &K_h F_i = q^{\langle h, -\alpha_i \rangle} F_i K_h, \\
    &E_i F_j - F_j E_i = \delta_{i,j} \frac{K_i - K_i^{-1}}{q_i - q_i^{-1}}, \\
    &\sum_{r+s=1-a_{i,j}} (-1)^s E_i^{(r)} E_j E_i^{(s)} = \sum_{r+s=1-a_{i,j}} (-1)^s F_i^{(r)} F_j F_i^{(s)} = 0 \quad \text{ if } i \neq j,
  \end{split}
\end{align*}
where
\[
  K_i := K_{d_i h_i},\ E_i^{(n)} := \frac{1}{[n]_i!} E_i^n,\ F_i^{(n)} := \frac{1}{[n]_i!} F_i^n\ \text{ for each } i \in I \text{ and } n \in \mathbb{Z}_{\geq 0}.
\]

The quantum group $\mathbf{U}$ has the following Hopf algebra structure with multiplication $\Delta$, counit $\epsilon$, and antipode $S$ (\cite[3.3.4]{Lus93}):
\begin{align*}
  \begin{split}
    &\Delta(E_i) = E_i \otimes 1 + K_i \otimes E_i, \\
    &\Delta(F_i) = 1 \otimes F_i + F_i \otimes K_i^{-1}, \\
    &\Delta(K_h) = K_h \otimes K_h, \\
    &\epsilon(E_i) = \epsilon(F_i) = 0, \quad \epsilon(K_h) = 1, \\
    &S(E_i) = -K_i^{-1} E_i, \quad S(F_i) = -F_i K_i^{-1}, \quad S(K_h) = K_{-h}.
  \end{split}
\end{align*}

The \emph{trivial module} is a one-dimensional $\mathbf{U}$-module defined by the counit $\epsilon: \mathbf{U} \rightarrow \mathbb{Q}(q)$.

Let $\overline{\cdot}$ denote the bar-involution on $\mathbf{U}$.

By \cite[19.1.1]{Lus93}, there exists a unique anti-algebra involution $\rho$ on $\mathbf{U}$ such that
\[
  \rho(E_i) = q_i K_i F_i, \quad \rho(F_i) = q_i K_i^{-1} E_i, \quad \rho(K_h) = K_h.
\]

Let $W$ denote the Weyl group of $I$ with simple reflections $s_i$ for $i \in I$.

For each $i \in I$, let $T_i$ denote the algebra automorphism $T''_{i,1}$ on $\mathbf{U}$ in \cite[Section 37.1]{Lus93}.
The automorphisms $(T_i)_{i \in I}$ satisfy the braid relations of type $I$.
Hence, for each $w \in W$ with a reduced expression $w = s_{i_1} \cdots s_{i_r}$, the composition
\[
  T_w := T_{i_1} \cdots T_{i_r}
\]
is well-defined.

\subsection{Integrable highest weight modules}
Let us recall some terminologies concerning $\mathbf{U}$-modules.

\begin{defi}\normalfont\hfill
  \begin{enumerate}
    \item A $\mathbf{U}$-module $M$ is said to be a \emph{weight module} if it possesses a weight space decomposition:
    \[
      M = \bigoplus_{\lambda \in X} M_\lambda, \quad M_\lambda := \{ m \in M \mid K_h m = q^{\langle h, \lambda \rangle} m \ \text{ for all } h \in Y \}.
    \]
    The vectors in $M_\lambda$ are called the \emph{weight vectors of weight $\lambda$}.
    \item A $\mathbf{U}$-module $M$ is said to be a \emph{highest weight module} if there exists a nonzero weight vector $m \in M$ of some weight $\lambda \in X$ satisfying the following:
    \begin{itemize}
      \item $E_i m = 0$ for all $i \in I$.
      \item $M = \mathbf{U} m$.
    \end{itemize}
    The weight $\lambda$ and the vector $m$ are referred to as the \emph{highest weight} and a \emph{highest weight vector}, respectively.
    \item A weight $\mathbf{U}$-module $M$ is said to be an \emph{integrable module} if for each $m \in M$, there exists a positive integer $N$ such that
    \[
      E_{i_1} \cdots E_{i_N} m = F_{i_1} \cdots F_{i_N} m = 0 \ \text{ for all } i_1,\dots,i_N \in I.
    \]
    \item A bilinear form $(,)$ on a $\mathbf{U}$-module $M$ is said to be \emph{contragredient} if
    \[
      (xu, v) = (u, \rho(x) v) \ \text{ for all } x \in \mathbf{U} \text{ and } u,v \in M.
    \]
  \end{enumerate}
\end{defi}

For each $\lambda \in X$, let $M(\lambda)$ denote the Verma module of highest weight $\lambda$ with highest weight vector $m_\lambda$.
It is isomorphic to the quotient module
\[
  \mathbf{U}/\left( \sum_{h \in Y} \mathbf{U}(K_h - q^{\langle h, \lambda \rangle}) + \sum_{i \in I} \mathbf{U} E_i \right).
\]

Let $X^+$ denote the set of dominant weights:
\[
  X^+ := \{ \lambda \in X \mid \langle h_i, \lambda \rangle \geq 0 \ \text{ for all } i \in I \}.
\]
For each $\lambda \in X^+$, there exists a unique integrable highest weight module $V(\lambda)$ of highest weight $\lambda$ with highest weight vector $v_\lambda$.
It is isomorphic to the quotient module
\[
  M(\lambda)/\sum_{i \in I} \mathbf{U} F_i^{(\langle h_i, \lambda \rangle + 1)} m_\lambda.
\]
Let $\mathbf{B}(\lambda)$ and $(\mathcal{L}(\lambda), \mathcal{B}(\lambda))$ denote the canonical basis and crystal base (at $q = \infty$) of $V(\lambda)$, respectively.

The trivial module is isomorphic to $V(0)$ under the isomorphism mapping $1$ to $v_0$.
We shall identify these two modules in this way.

By \cite[Proposition 19.1.2]{Lus93}, there exists a unique contragredient symmetric bilinear form $(,)$ on $V(\lambda)$ such that
\[
  (v_\lambda, v_\lambda) = 1.
\]
The canonical basis $\mathbf{B}(\lambda)$ forms an almost orthonormal basis with respect to this form (\cite[Proposition 19.3.3]{Lus93}):
\[
  (b, b') \in \delta_{b,b'} + q^{-1} \mathbb{Z}[q^{-1}]\ \text{ for all } b,b' \in \mathbf{B}(\lambda).
\]

\begin{thm}[{See e.g.\ \cite[Proposition 6.3.6]{Lus93}}]\label{thm: comp red for fin}
Assume that the Cartan datum $I$ is of finite type.
Then, each integrable $\mathbf{U}$-module is completely reducible with irreducible components isomorphic to $V(\lambda)$ with various $\lambda \in X^+$.
\end{thm}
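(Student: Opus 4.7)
The plan is to prove the theorem in two stages: (i) every nonzero integrable $\mathbf{U}$-module $M$ contains a submodule isomorphic to $V(\lambda)$ for some $\lambda \in X^+$, and (ii) any such submodule splits off as a direct summand. Complete reducibility then follows by a Zorn's lemma argument applied to the poset of submodules of $M$ expressible as direct sums of various $V(\lambda)$'s: maximality forces the sum to exhaust $M$, since otherwise (i) applied to the quotient and (ii) applied to the lift would yield a strictly larger such submodule.

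For stage (i), given a nonzero weight vector $m_\mu \in M_\mu$, I would analyse $\mathbf{U}^+ m_\mu$. In finite type, $\mathbf{U}^+$ is spanned by PBW monomials in the root vectors $E_\alpha$ produced from the $E_i$'s via the braid operators $T_w$; each $E_\alpha$ inherits local nilpotency on $m_\mu$ from the integrability of the $E_i$'s, so only finitely many PBW monomials survive and $\mathbf{U}^+ m_\mu$ is finite-dimensional. Consequently there exists a weight vector $v \in \mathbf{U}^+ m_\mu$ of weight $\lambda$ maximal in the usual partial order. Since every $E_i v$ has weight $\lambda + \alpha_i$ and still lies in $\mathbf{U}^+ m_\mu$, maximality forces $E_i v = 0$, so $v$ is a highest weight vector. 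Local nilpotency of $F_i$ on $v$ together with $E_i v = 0$ yields $\langle h_i, \lambda \rangle \in \mathbb{Z}_{\geq 0}$ by the standard quantum $\mathfrak{sl}_2$ argument, hence $\lambda \in X^+$, and the universal property of $V(\lambda)$ provides a surjection $V(\lambda) \twoheadrightarrow \mathbf{U} v$. The irreducibility of $V(\lambda)$ — a consequence of the nondegeneracy of the contragredient form in finite type — upgrades this to an isomorphism.

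For stage (ii), I would produce a $\mathbf{U}$-linear projector $M \twoheadrightarrow V(\lambda)$ splitting the inclusion $V(\lambda) \hookrightarrow M$. One natural route is to invoke a quantum Casimir-type central element acting by mutually distinct scalars on the $V(\lambda)$ with different $\lambda \in X^+$, decomposing $M$ into generalised eigenspaces and reducing to the isotypic case, where the nondegeneracy of the contragredient form on $V(\lambda)$ provides the splitting. An alternative is a specialisation/continuity argument at $q = 1$ reducing to Weyl's classical complete reducibility theorem for the enveloping algebra of the associated semisimple Lie algebra.

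The main obstacle is stage (ii), namely the construction of a central element (or equivalent splitting device) separating inequivalent highest weights on an arbitrary, possibly infinite-dimensional, integrable module. This technical apparatus is precisely what is assembled in \cite[Chapter 6]{Lus93}, where the result appears as Proposition 6.3.6. Since the present paper uses this theorem solely as a foundational input for the subsequent $\imath$-quantum analysis, the cleanest exposition is to simply cite Lusztig rather than reproduce the machinery.
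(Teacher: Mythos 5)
Your proposal ultimately lands on the same resolution as the paper: the theorem is stated with a citation to \cite[Proposition 6.3.6]{Lus93} and no proof is given, and you correctly conclude that deferring to Lusztig is the appropriate course since the result is used only as foundational input. The proof sketch you include is a reasonable outline of Lusztig's own argument (extraction of a highest weight vector, dominance via quantum $\mathfrak{sl}_2$, splitting via the quantum Casimir); one small imprecision is that in stage (i) the finite-dimensionality of $\mathbf{U}^+ m_\mu$ follows directly from the definition of integrability adopted in the paper — the existence of a uniform $N$ with $E_{i_1}\cdots E_{i_N} m_\mu = 0$ for all sequences immediately bounds $\mathbf{U}^+ m_\mu$ by monomials of length $<N$ in the finitely many $E_i$'s — so the detour through PBW root vectors and their individual local nilpotency is unnecessary and by itself would not give the conclusion without more work.
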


\subsection{Based modules}
Let $\mathcal{A} := \mathbb{Z}[q,q^{-1}]$ denote the ring of Laurent polynomials with coefficients in $\mathbb{Z}$.
Let $\dot{\mathbf{U}}$ denote the modified form of $\mathbf{U}$, and ${}_\mathcal{A} \dot{\mathbf{U}}$ the integral form.
As shown in \cite[23.1.4]{Lus93}, each weight $\mathbf{U}$-module has a natural $\dot{\mathbf{U}}$-module structure.

Let $\mathbf{A}_\infty$ denote the subring of $\mathbb{Q}(q)$ consisting of all rational functions regular at $q = \infty$.

\begin{defi}\normalfont
  A weight $\mathbf{U}$-module $M$ equipped with a basis $B$ is said to be \emph{based} if it satisfies the following:
  \begin{enumerate}
    \item The set $B \cap M_\lambda$ is a basis of $M_\lambda$ for all $\lambda \in X$.
    \item The $\mathcal{A}$-submodule ${}_\mathcal{A} M$ of $M$ spanned by $B$ is stable under the action of ${}_\mathcal{A} \dot{\mathbf{U}}$.
    \item The $\mathbb{Q}$-linear endomorphism $\overline{\cdot}$ on $M$, defined by $\overline{q^n b} = q^{-n} b$ for all $n \in \mathbb{Z}$ and $b \in B$, is compatible with the bar-involution on $\mathbf{U}$:
    \[
      \overline{xm} = \bar{x} \bar{m} \quad \text{ for all } x \in \mathbf{U},\ m \in M.
    \]
    \item Set $\mathcal{L} := \mathbf{A}_\infty B$ and $\mathcal{B} := \{ b + q^{-1} \mathcal{L} \mid b \in B \}$.
    Then, the set $\mathcal{B}$ is a $\mathbb{Q}$-basis of $\mathcal{L}/q^{-1} \mathcal{L}$.
  \end{enumerate}
\end{defi}

\begin{defi}\normalfont
  A $\mathbf{U}$-module homomorphism $f : M \rightarrow N$ between two based modules $(M, B_M)$ and $(N, B_N)$ is said to be \emph{based} if $f(B_M) \subseteq B_N \sqcup \{0\}$ and the kernel of $f$ is spanned by a subset of $B_M$.
\end{defi}

\begin{defi}\normalfont
  A $\mathbf{U}$-submodule $N$ of a based module $M$ is said to be a \emph{based submodule} if the inclusion map $N \hookrightarrow M$ is a based homomorphism.
\end{defi}

Given a based module $(M, B)$ and two vectors $u,v \in M$, we write $u \equiv_\infty v$ to indicate that
\[
  u-v \in q^{-1} \mathcal{L}.
\]

\begin{ex}\label{ex: characterization of the hwv at infty}\normalfont
  Let $\lambda \in X^+$.
  The integrable highest weight module $V(\lambda)$ with its canonical basis $\mathbf{B}(\lambda)$ forms a based module.
  For each $v \in \mathcal{L}(\lambda)$, we have
  \[
    \tilde{E}_i v \equiv_\infty 0 \text{ for all } i \in I \text{ if and only if } v \equiv_\infty c v_\lambda \text{ for some } c \in \mathbb{Q},
  \]
  where $\tilde{E}_i$ denotes the Kashiwara operator.
\end{ex}

\begin{ex}\normalfont
  Let $\lambda, \mu \in X^+$.
  Set
  \[
    \mathbf{B}(\lambda) \diamondsuit \mathbf{B}(\mu) := \{ b_1 \diamondsuit b_2 \mid (b_1, b_2) \in \mathbf{B}(\lambda) \times \mathbf{B}(\mu) \},
  \]
  where $b_1 \diamondsuit b_2$ is as in \cite[Theorem 2.7 (1)]{BW16}.
  Then, the tensor product $V(\lambda) \otimes V(\mu)$ with $\mathbf{B}(\lambda) \diamondsuit \mathbf{B}(\mu)$ is a based module (\cite[Theorem 2.7]{BW16}).
\end{ex}

\begin{ex}\normalfont
  For each $\lambda, \mu \in X^+$ and $w \in W$, let $V(w\lambda, \mu)$ denote the $\mathbf{U}$-submodule of $V(\lambda) \otimes V(\mu)$ generated by $v_{w\lambda} \otimes v_\mu$, where $v_{w\lambda}$ denote the unique element in $\mathbf{B}(\lambda)$ of weight $w\lambda$.
  Then, the $\mathbf{U}$-submodule $V(w\lambda, \mu)$ is a based submodule of $(V(\lambda) \otimes V(\mu), \mathbf{B}(\lambda) \diamondsuit \mathbf{B}(\mu))$ (\cite[Theorem 2.2]{BW21}).
  Set
  \[
    \mathbf{B}(w\lambda, \mu) := V(w\lambda, \mu) \cap \mathbf{B}(\lambda) \diamondsuit \mathbf{B}(\mu).
  \]
\end{ex}

\section{Quantum symmetric pairs and $\imath$canonical bases}\label{sect: qsp and cb}
In this section, we recall basic results and fix our notations concerning quantum symmetric pairs and $\imath$canonical bases of various objects.
Then, we state the stability conjecture.

\subsection{Admissible pairs and Satake data}
Let $(I_\bullet, \tau)$ be an admissible pair of type $I$ in the sense of \cite[Definition 2.3]{Kol14}.
In particular, the $I_\bullet$ is a subdatum of the Cartan datum $I$ of finite type, and the $\tau$ is an automorphism on $I$ of order at most $2$.

\begin{rem}\normalfont
  Regelskis and Vlaar \cite{RV20} generalized the notion of admissible pairs to that of generalized Satake diagrams by weakening one of the axioms for admissible pairs.
  Most of this paper is valid for all generalized Satake diagrams because we do not use the axiom.
\end{rem}

We use the following notation throughout the paper.
\begin{itemize}
  \item $W_\bullet \subseteq W$: the subgroup generated by $s_j$ for all $j \in I_\bullet$.
  It is the Weyl group of the Cartan datum $I_\bullet$.
  \item $w_\bullet \in W_\bullet$: the longest element of $W_\bullet$.
  \item $\rho^\vee_\bullet \in Y$: half the sum of all positive coroots of $I_\bullet$.
  \item $\rho_\bullet \in X$: half the sum of all positive roots of $I_\bullet$.
  \item $I_\circ := I \setminus I_\bullet$.
  \item $I_\circ^{\tau, \bullet} := \{ i \in I_\circ \mid \tau(i) = i \text{ and } \langle h_j, \alpha_i \rangle = 0 \text{ for all } j \in I_\bullet \}$, or equivalently,
  \[
    I_\circ^{\tau, \bullet} := \{ i \in I \mid h_i - w_\bullet h_{\tau(i)} = 0 \}.
  \]
\end{itemize}

\begin{defi}\label{def: related to adm pair}\normalfont\hfill
  \begin{enumerate}
    \item The \emph{real rank} of $(I_\bullet, \tau)$ is the number of $\tau$-orbits in $I_\circ$.
    \item Given $i,j \in I$, we say that $j$ is \emph{connected} to $i$ if there exists a sequence $i = i_1, i_2, \dots, i_r = j$ such that $a_{i_k, i_{k+1}} \neq 0$ for all $k = 1,\dots,r-1$.
    \item $(I_\bullet, \tau)$ is said to be \emph{irreducible} if for each $i,j \in I$, the $j$ is connected to either $i$ or $\tau(i)$.
    \item\label{item: real rank 1 comp} The \emph{real rank $1$ component} of $i \in I_\circ$ is the subdatum $I_i \subseteq I$ consisting of $i$, $\tau(i)$, and all $j \in I_\bullet$ connected to either $i$ or $\tau(i)$.
    Note that the pair $(I_{i, \bullet} := I_\bullet \cap I_i, \tau|_{I_i})$ is an irreducible admissible pair of real rank $1$ of type $I_i$.
    \item $(I_\bullet, \tau)$ is said to be of \emph{finite type} if the Cartan datum $I$ is of finite type.
    \item $(I_\bullet, \tau)$ is said to be of \emph{locally finite type} if for each $i \in I_\circ$, the Cartan datum $I_i$ is of finite type.
  \end{enumerate}
\end{defi}

\begin{defi}\label{def: satake datum}\normalfont
  The root datum $(Y, X, \langle , \rangle, \Pi^\vee, \Pi)$ equipped with two group automorphisms $\tau$ on the lattices $Y$ and $X$ of order at most $2$, is said to be a \emph{Satake datum} of type $(I_\bullet, \tau)$ if
  \[
    \tau(h_i) = h_{\tau(i)}, \quad \tau(\alpha_i) = \alpha_{\tau(i)} \ \text{ for all } i \in I,
  \]
  and
  \[
    \langle \tau(h), \tau(\lambda) \rangle = \langle h, \lambda \rangle \ \text{ for all } h \in Y, \text{ and } \lambda \in X.
  \]
\end{defi}

From now on, we assume that the root datum $(Y, X, \langle , \rangle, \Pi^\vee, \Pi)$ is a Satake datum of type $(I_\bullet, \tau)$.
Set
\[
  X^\imath := X/\{ \lambda + w_\bullet \tau(\lambda) \mid \lambda \in X \}, \quad Y^\imath := \{ h \in Y \mid h + w_\bullet \tau(h) = 0 \}.
\]
Then, the perfect pairing $\langle ,  \rangle: Y \times X \rightarrow \mathbb{Z}$ induces a (not necessarily perfect) bilinear pairing
\[
  \langle ,  \rangle: Y^\imath \times X^\imath \rightarrow \mathbb{Z}.
\]
For each $\lambda \in X$, let $\overline{\lambda}$ denote the image of $\lambda$ in $X^\imath$.

\subsection{Quantum symmetric pairs}
Let ${\boldsymbol \varsigma} = (\varsigma_i)_{i \in I_\circ} \in (\mathbb{Q}(q)^\times)^{I_\circ}$ and ${\boldsymbol \kappa} = (\kappa_i)_{i \in I_\circ} \in \mathbb{Q}(q)^{I_\circ}$ be parameters satisfying the following for all $i \in I_\circ$.
\begin{align}
  &\varsigma_i, \kappa_i \in \mathcal{A}. \label{eq: constr for paramas of qsp 1} \\
  &\varsigma_{\tau(i)} = \varsigma_i \text{ if } \langle h_i, w_\bullet \alpha_{\tau(i)} \rangle = 0. \label{eq: constr for paramas of qsp 2} \\
  &\varsigma_i = (-1)^{\langle 2\rho_\bullet^\vee, \alpha_i \rangle} q_i^{-\langle h_i, 2\rho_\bullet + w_\bullet \alpha_{\tau(i)} \rangle} \overline{\varsigma_{\tau(i)}}. \label{eq: constr for paramas of qsp 3} \\
  &\overline{\varsigma_i} = \varsigma_i^{-1}. \label{eq: constr for paramas of qsp 4} \\
  &\kappa_i = 0 \text{ unless } i \in I_\circ^{\tau, \bullet} \text{ and } \langle h_k, \alpha_i \rangle \in 2\mathbb{Z} \text{ for all } k \in I_\circ^{\tau, \bullet}. \label{eq: constr for paramas of qsp 5} \\
  &\overline{\kappa_i} = \kappa_i. \label{eq: constr for paramas of qsp 6}
\end{align}

\begin{rem}\label{rem: params for Ui}\normalfont
  Our constraints \eqref{eq: constr for paramas of qsp 1}--\eqref{eq: constr for paramas of qsp 6} for the parameters ${\boldsymbol \varsigma}$ and ${\boldsymbol \kappa}$ are the same as those in \cite{BW21} except that we further require that $\overline{\varsigma_i} = \varsigma_i^{-1}$ as \cite{BW18}.
  This additional constraint is needed for Proposition \ref{prop: rho on Ui} below.
\end{rem}

For each $i \in I$, set
\[
  B_i := \begin{cases}
    F_i & \text{ if } i \in I_\bullet, \\
    F_i + \varsigma_i T_{w_\bullet}(E_{\tau(i)})K_i^{-1} + \kappa_i K_i^{-1} & \text{ if } i \in I_\circ.
  \end{cases}
\]

The \emph{$\imath$quantum group} associated with the datum above is the subalgebra $\mathbf{U}^\imath$ of the quantum group $\mathbf{U}$ generated by
\[
  \{ E_j, B_i, K_h \mid j \in I_\bullet,\ i \in I,\ h \in Y^\imath \}.
\]

\begin{rem}\normalfont
  In \cite{BW21}, it is assumed that either $I_\bullet = \emptyset$ or $|a_{i,j}| \leq 3$ for all $i,j \in I$.
  This assumption is used only to ensure the existence of bar-involution on $\mathbf{U}^\imath$.
  Now, we can remove this assumption due to the existence theorem \cite[Corollary 4.2]{Kol22} of bar-involution for arbitrary generalized Satake diagrams (see also \cite[Remark 3.6]{BW21}).
\end{rem}

\begin{prop}\label{prop: rho on Ui}
  The anti-algebra involution $\rho$ on $\mathbf{U}$ restricts to a one on $\mathbf{U}^\imath$$:$
  \[
    \rho(\mathbf{U}^\imath) = \mathbf{U}^\imath.
  \]
\end{prop}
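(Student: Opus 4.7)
Since $\rho$ is an involutive anti-algebra automorphism of $\mathbf{U}$, the identity $\rho(\mathbf{U}^\imath) = \mathbf{U}^\imath$ will follow once we check $\rho(g) \in \mathbf{U}^\imath$ for each generator $g$ of $\mathbf{U}^\imath$. The case $g = K_h$ with $h \in Y^\imath$ is immediate since $\rho(K_h) = K_h$.

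For $j \in I_\bullet$, my first step is to invoke the admissible-pair axiom, which gives $w_\bullet \alpha_j = -\alpha_{\tau(j)}$, equivalently $w_\bullet h_{\tau(j)} = -h_j$. Combined with $d_{\tau(j)} = d_j$, this yields $d_j h_j \in Y^\imath$, hence $K_j \in \mathbf{U}^\imath$. One then obtains immediately $\rho(E_j) = q_j K_j F_j = q_j K_j B_j \in \mathbf{U}^\imath$ and $\rho(B_j) = \rho(F_j) = q_j K_j^{-1} E_j \in \mathbf{U}^\imath$.

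The main work is to show $\rho(B_i) \in \mathbf{U}^\imath$ for $i \in I_\circ$. A direct expansion gives
\[
\rho(B_i) = q_i K_i^{-1} E_i + \varsigma_i K_i^{-1} \rho\bigl(T_{w_\bullet}(E_{\tau(i)})\bigr) + \kappa_i K_i^{-1},
\]
so the crux is computing $\rho(T_{w_\bullet}(E_{\tau(i)}))$. My plan is to establish, by induction along a reduced expression $w_\bullet = s_{j_1} \cdots s_{j_r}$, an identity of the form
\[
\rho\bigl(T_{w_\bullet}(E_{\tau(i)})\bigr) = q_{\tau(i)}\, K_{\eta_i}\, T^{\star}_{w_\bullet}(F_{\tau(i)}),
\]
for an explicit $\eta_i \in Y$ depending only on the weight of $T_{w_\bullet}(E_{\tau(i)})$, where $T^{\star}_{w_\bullet}$ is an appropriate braid variant (either $T_{w_\bullet}$, $T_{w_\bullet}^{-1}$, or a bar-conjugate thereof, determined by the base step). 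The base case is a direct rank-one calculation using $T_j(E_k) = \sum_{r+s=-a_{jk}} (-1)^r q_j^{-r} E_j^{(r)} E_k E_j^{(s)}$ together with $\rho(E_k) = q_k K_k F_k$; the inductive step amounts to commuting $\rho$ past one more $T_j$. Substituting this back into $\rho(B_i)$ and using the definition of $B_{\tau(i)}$ to absorb $T_{w_\bullet}(F_{\tau(i)}) K_{\tau(i)}^{-1}$, I expect $\rho(B_i)$ to reassemble as $c_i K_{\mu_i} B_{\tau(i)}$ plus an element manifestly in $\mathbf{U}^\imath$ (coming from the already-handled Levi generators and from $K_h$ with $h \in Y^\imath$), for explicit scalar $c_i$ and lattice vector $\mu_i$.

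Two technical obstacles remain. First, carrying out the iterated intertwining identity cleanly requires careful bookkeeping of $q$-powers, though this is significantly simplified by the fact that $\tau(i) \in I_\circ$ while every $j$ appearing in $T_{w_\bullet}$ lies in $I_\bullet$, so the ``diagonal'' case $\rho \circ T_j$ on $E_j$ never arises. Second, one must verify that $\mu_i \in Y^\imath$ so that $K_{\mu_i} \in \mathbf{U}^\imath$. This is where the parameter constraints \eqref{eq: constr for paramas of qsp 1}--\eqref{eq: constr for paramas of qsp 6} enter: constraint \eqref{eq: constr for paramas of qsp 3}, relating $\varsigma_i$ to $\overline{\varsigma_{\tau(i)}}$, is precisely tailored to absorb the $q$-power and sign produced by $T_{w_\bullet}$, while the new constraint $\overline{\varsigma_i} = \varsigma_i^{-1}$ of \eqref{eq: constr for paramas of qsp 4} (the one explicitly flagged in Remark~\ref{rem: params for Ui}) forces $c_i$ to be bar-invariant; this bar-invariance is equivalent to $\mu_i$ being $-w_\bullet \tau$-fixed, i.e., to $\mu_i \in Y^\imath$, closing the argument.
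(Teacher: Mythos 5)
Your general strategy — reduce to checking $\rho$ on generators, handle the Cartan and Levi generators directly, and then compute $\rho(B_i)$ for $i \in I_\circ$ by tracking $\rho$ through $T_{w_\bullet}$ — is in substance the same as the argument the paper invokes by citing \cite[Section 4.2 and Proposition 4.16]{BW18}; the paper's own proof is precisely the observation that with the extra constraint \eqref{eq: constr for paramas of qsp 4}, that argument goes through verbatim. However, what you have written is a plan rather than a proof: the central intertwining identity $\rho\bigl(T_{w_\bullet}(E_{\tau(i)})\bigr) = q_{\tau(i)} K_{\eta_i} T^{\star}_{w_\bullet}(F_{\tau(i)})$ is stated as something to be established, the nature of $T^{\star}_{w_\bullet}$ (which of $T_{w_\bullet}$, $T_{w_\bullet}^{-1}$, or a bar-twist) is left open, and the two ``technical obstacles'' you flag at the end are the whole content of the proposition. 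The constraints \eqref{eq: constr for paramas of qsp 3} and \eqref{eq: constr for paramas of qsp 4} do in fact encode exactly the sign and $q$-power bookkeeping you anticipate, but that verification is where the work lies and it is not carried out.

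There is also a genuine misstep in the ``reassembly'' paragraph. You say you will ``use the definition of $B_{\tau(i)}$ to absorb $T_{w_\bullet}(F_{\tau(i)}) K_{\tau(i)}^{-1}$,'' but the defining formula $B_{\tau(i)} = F_{\tau(i)} + \varsigma_{\tau(i)} T_{w_\bullet}(E_i) K_{\tau(i)}^{-1} + \kappa_{\tau(i)} K_{\tau(i)}^{-1}$ contains $F_{\tau(i)}$ and $T_{w_\bullet}(E_i)K_{\tau(i)}^{-1}$, not $T_{w_\bullet}(F_{\tau(i)})K_{\tau(i)}^{-1}$. So after substituting your intertwining identity, the two summands $q_i K_i^{-1} E_i$ and $\varsigma_i K_i^{-1} \rho(T_{w_\bullet}(E_{\tau(i)}))$ do not match the two non-Cartan summands of $B_{\tau(i)}$ term by term; the actual statement in \cite{BW18} expresses $\rho(B_i)$ via a braid-group twist of $B_{\tau(i)}$ (roughly $T_{w_\bullet}^{-1}(B_{\tau(i)})$ up to a Cartan element), and then one still needs to check that this twisted element lies in $\mathbf{U}^\imath$ — a nonobvious extra step. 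As written, your proposal neither produces the correct form of $\rho(B_i)$ nor addresses this last point, so the argument does not close.
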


\begin{proof}
  The argument in \cite[Section 4.2]{BW18} is still valid for our setting (see Remark \ref{rem: params for Ui}).
  Hence, the assertion follows from \cite[Proposition 4.16]{BW18}.
\end{proof}

\begin{cor}\label{cor: V(lm) as Ui-mod is comp red}
  Let $\lambda \in X^+$.
  Then, the integrable highest weight module $V(\lambda)$, regarded as a $\mathbf{U}^\imath$-module, is completely reducible.
\end{cor}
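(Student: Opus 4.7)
The plan is to use the contragredient symmetric bilinear form $(,)$ on $V(\lambda)$ together with Proposition \ref{prop: rho on Ui} to build $\mathbf{U}^\imath$-equivariant orthogonal complements to every submodule. Since $\rho(\mathbf{U}^\imath) = \mathbf{U}^\imath$, the identity $(xu,v) = (u,\rho(x)v)$ holds for all $x \in \mathbf{U}^\imath$, and consequently, for any $\mathbf{U}^\imath$-submodule $N \subseteq V(\lambda)$, the orthogonal complement
\[
  N^\perp := \{ v \in V(\lambda) \mid (v,n) = 0 \text{ for all } n \in N \}
\]
is again a $\mathbf{U}^\imath$-submodule.

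The crucial technical input is to verify that the form is \emph{anisotropic}, i.e., $(v,v) \neq 0$ for every nonzero $v \in V(\lambda)$. Given $v \neq 0$, I would rescale by an appropriate power of $q$ to obtain $v' \in \mathcal{L}(\lambda) \setminus q^{-1}\mathcal{L}(\lambda)$; writing $v' = \sum_{b \in \mathbf{B}(\lambda)} c_b b$ with $c_b \in \mathbf{A}_\infty$ and setting $c_b^{(0)} := c_b(\infty) \in \mathbb{Q}$, the almost-orthonormality $(b,b') \in \delta_{b,b'} + q^{-1}\mathbb{Z}[q^{-1}]$ gives
\[
  (v',v') \equiv \sum_{b \in \mathbf{B}(\lambda)} \bigl(c_b^{(0)}\bigr)^2 \pmod{q^{-1}\mathbf{A}_\infty}.
\]
The normalization ensures that some $c_b^{(0)}$ is nonzero, so the right-hand side is a strictly positive rational, whence $(v',v') \neq 0$ and therefore $(v,v) \neq 0$.

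Anisotropy forces $N \cap N^\perp = 0$ at once, since any vector in the intersection is self-orthogonal. To conclude $V(\lambda) = N \oplus N^\perp$, I would exploit that the form is block-diagonal with respect to the weight grading — distinct $X$-weight spaces are mutually orthogonal because $\rho(K_h) = K_h$ — and restricts to an anisotropic, hence nondegenerate, form on each finite-dimensional weight space $V(\lambda)_\mu$. In the finite-type case $V(\lambda)$ is itself finite-dimensional, and the decomposition $V(\lambda) = N \oplus N^\perp$ then follows by dimension counting from nondegeneracy and trivial intersection; iterating (with Zorn's lemma as needed) exhibits $V(\lambda)$ as a direct sum of irreducible $\mathbf{U}^\imath$-submodules. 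The anisotropy step is the heart of the argument; the main obstacle I anticipate is, in the infinite-dimensional case, upgrading anisotropy to the full equality $N + N^\perp = V(\lambda)$, since $\mathbf{U}^\imath$-submodules need not respect the $X$-grading and so the argument cannot simply be carried out weight-by-weight.
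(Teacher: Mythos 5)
Your argument follows the paper's proof exactly in its essentials: form the orthogonal complement $N^\perp$ with respect to the contragredient form and invoke Proposition~\ref{prop: rho on Ui} to see that $N^\perp$ is $\mathbf{U}^\imath$-stable. The explicit anisotropy check via almost-orthonormality of $\mathbf{B}(\lambda)$ at $q = \infty$ is correct and is a useful elaboration of a point the paper leaves implicit; it is exactly what yields $N \cap N^\perp = 0$, since nondegeneracy of the form on $V(\lambda)$ alone would not.

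You are also right to flag the step $N + N^\perp = V(\lambda)$ as the genuine remaining difficulty when the Cartan datum $I$ is not of finite type. A $\mathbf{U}^\imath$-submodule is graded only by $X^\imath$-weights, and a single $X^\imath$-weight space of $V(\lambda)$ can be infinite-dimensional (already in affine type with $I_\bullet = \emptyset$ and $\tau = \mathrm{id}$, where $X^\imath$ is $X/2X$ and a whole $\delta$-string lies in one coset), so the weight-by-weight finite-dimensional linear algebra you describe does not apply directly. Anisotropy of an infinite-dimensional form does not by itself yield orthogonal decompositions: a codimension-one subspace of a space with an anisotropic form can have trivial orthogonal complement. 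The paper's proof is equally terse here and does not spell this step out either; to close it one would want, e.g., a local-finiteness statement for the $\mathbf{U}^\imath$-action on $V(\lambda)$, so that the complement can be built up from finite-dimensional $\mathbf{U}^\imath$-submodules where the argument does work, or else to restrict attention to $I$ of finite type. So your proposal is no less rigorous than the proof in print, and your self-diagnosis of the gap is accurate.
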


\begin{proof}
  Let $M \subseteq V(\lambda)$ be a $\mathbf{U}^\imath$-submodule, and $M^\perp$ the orthogonal complement of $M$ with respect to the nondegenerate contragredient bilinear form $(,)$ on $V(\lambda)$.
  By Proposition \ref{prop: rho on Ui}, we see that $M^\perp$ is a $\mathbf{U}^\imath$-submodule of $V(\lambda)$.
  Hence, the assertion follows.
\end{proof}

Let $\dot{\mathbf{U}}^\imath$ denote the modified $\imath$quantum group, ${}_\mathcal{A} \dot{\mathbf{U}}^\imath$ its integral form, and $\dot{\mathbf{B}}^\imath$ the $\imath$canonical basis of $\dot{\mathbf{U}}^\imath$.
For each $\zeta \in X^\imath$, let $1_\zeta \in \dot{\mathbf{U}}^\imath$ denote the corresponding idempotent.

\subsection{Weight Modules}
\begin{defi}\normalfont
  A $\mathbf{U}^\imath$-module $M$ is said to be a \emph{weight module} if it possesses a weight space decomposition, i.e., a vector space decomposition
  \[
    M = \bigoplus_{\zeta \in X^\imath} M_\zeta
  \]
  satisfying the following for all $\zeta \in X^\imath$.
  \begin{enumerate}
    \item $K_h m = q^{\langle h, \zeta \rangle} m$ for all $m \in M_\zeta$ and $h \in Y^\imath$.
    \item $E_j M_\zeta \subseteq M_{\zeta + \overline{\alpha_i}}$ for all $j \in I_\bullet$.
    \item $B_i M_\zeta \subseteq M_{\zeta - \overline{\alpha_i}}$ for all $i \in I$.
  \end{enumerate}
\end{defi}

A weight $\mathbf{U}$-module $M = \bigoplus_{\lambda \in X} M_\lambda$, regarded as a $\mathbf{U}^\imath$-module by restriction, is a weight $\mathbf{U}^\imath$-module with
\[
  M_\zeta = \bigoplus_{\substack{\lambda \in X \\ \overline{\lambda} = \zeta}} M_\lambda \quad \text{ for all } \zeta \in X^\imath.
\]

A weight $\mathbf{U}^\imath$-module $M = \bigoplus_{\zeta \in X^\imath} M_\zeta$ is equipped with the following $\dot{\mathbf{U}}^\imath$-module structure:
\[
  (x 1_\zeta) m := \delta_{\zeta, \eta} xm \quad \text{ for all } x \in \mathbf{U}^\imath,\ \zeta,\eta \in X^\imath,\ m \in M_\eta.
\]

Combining the two constructions above, we see that each weight $\mathbf{U}$-module has a natural $\dot{\mathbf{U}}^\imath$-module structure.
In particular, the $\mathbf{U}$-module $V(\lambda)$ with $\lambda \in X^+$ can be seen as a $\dot{\mathbf{U}}^\imath$-module such that
\[
  (x1_\zeta) v_\lambda = \delta_{\zeta, \overline{\lambda}} x v_\lambda \quad \text{ for all } x \in \mathbf{U}^\imath \text{ and } \zeta \in X^\imath.
\]

\begin{prop}[{{\it cf.}\ \cite[Lemma 6.2.1]{W21}}]\label{prop: hwm is a cyclic Ui mod}
  Let $M$ be a highest weight $\mathbf{U}$-module with highest weight vector $m$.
  Then, we have
  \[
    \dot{\mathbf{U}}^\imath m = M.
  \]
\end{prop}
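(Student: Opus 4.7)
The plan is to set $M' := \dot{\mathbf{U}}^\imath m$ and prove $M_\mu \subseteq M'$ for every weight $\mu$ of $M$ by induction on $h := \operatorname{ht}(\lambda - \mu)$, where $\lambda$ is the highest weight of $M$. The base case $h = 0$ is trivial because $M_\lambda = \mathbb{Q}(q) m$. For the inductive step, since $E_i m = 0$ gives $M = \mathbf{U}^- m$, every $v \in M_\mu$ with $h \geq 1$ is a $\mathbb{Q}(q)$-linear combination of vectors of the form $F_i v''$ with $i \in I$ and $v'' \in M_{\mu + \alpha_i}$, and by the inductive hypothesis $v'' \in M'$. It therefore suffices to show $F_i v'' \in M'$.

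If $i \in I_\bullet$, this is immediate from $F_i = B_i \in \mathbf{U}^\imath$. If $i \in I_\circ$, I would rearrange the definition of $B_i$ to write
\[
  F_i v'' \;=\; B_i v'' \;-\; \varsigma_i\, T_{w_\bullet}(E_{\tau(i)}) K_i^{-1} v'' \;-\; \kappa_i K_i^{-1} v'',
\]
and argue term by term. The first summand lies in $M'$ because $B_i \in \mathbf{U}^\imath$ and $v'' \in M'$; the third is a scalar multiple of $v''$ and hence also in $M'$.

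The main obstacle is the middle term: $T_{w_\bullet}(E_{\tau(i)})$ is an element of $\mathbf{U}$ that is \emph{not} in $\mathbf{U}^\imath$, so membership in $M'$ cannot be read off from closure under the $\mathbf{U}^\imath$-action. I would resolve this by a weight argument. The vector $T_{w_\bullet}(E_{\tau(i)}) K_i^{-1} v''$ lies in $M_{\mu + \alpha_i + w_\bullet \alpha_{\tau(i)}}$. Since $\tau(i) \in I_\circ$ and $w_\bullet$ is a product of simple reflections $s_j$ with $j \in I_\bullet$, applying each such $s_j$ only adds an integer multiple of $\alpha_j$ and never alters the coefficient of $\alpha_k$ for $k \in I_\circ$; hence $w_\bullet \alpha_{\tau(i)}$ has coefficient $1$ at $\alpha_{\tau(i)}$, coefficient $0$ at every other $\alpha_k$ with $k \in I_\circ$, and consequently is a positive root. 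Thus $\operatorname{ht}(\alpha_i + w_\bullet \alpha_{\tau(i)}) \geq 2$ (with or without $\tau(i) = i$), so
\[
  \operatorname{ht}\!\bigl(\lambda - (\mu + \alpha_i + w_\bullet \alpha_{\tau(i)})\bigr) \;\leq\; h - 2 \;<\; h.
\]
By the inductive hypothesis the entire weight space $M_{\mu + \alpha_i + w_\bullet \alpha_{\tau(i)}}$ is contained in $M'$, so the middle term lies in $M'$. This yields $F_i v'' \in M'$ and closes the induction. The crucial input is the positivity $w_\bullet \alpha_{\tau(i)} \in Q^+$ with height $\geq 1$, which is exactly what converts the extra Chevalley-type term in $B_i$ from an obstruction into an occasion to apply the inductive hypothesis.
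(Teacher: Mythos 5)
Your proof is correct, and it is essentially the standard argument that underlies the reference the paper cites ([Kol14, Proposition 6.1]): the paper's own proof is a one-line citation, whereas you have unpacked the induction on $\operatorname{ht}(\lambda-\mu)$, used $B_i = F_i + (\text{weight-raising corrections})$, and observed that $w_\bullet\alpha_{\tau(i)}$ is a positive root so the correction term lands in a weight space already covered by the inductive hypothesis. All the steps check out: the weight computation for $T_{w_\bullet}(E_{\tau(i)})K_i^{-1}v''$, the positivity of $w_\bullet\alpha_{\tau(i)}$ (the coefficient of $\alpha_{\tau(i)}$ stays $1$ under $W_\bullet$), and the handling of the $\kappa_i K_i^{-1}$ term as a scalar on a weight vector. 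One small remark on exposition rather than correctness: you should note explicitly that $\dot{\mathbf{U}}^\imath m = \mathbf{U}^\imath m$ because $m$ is a weight vector (so the idempotents $1_\zeta$ act as projections and contribute nothing new), which is what lets you work throughout with $\mathbf{U}^\imath$ rather than its modified form.
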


\begin{proof}
  The assertion follows from \cite[Proposition 6.1]{Kol14}.
\end{proof}

\begin{prop}[{{\it cf.}\ \cite[Lemma 6.2.2]{W21}}]\label{prop: presen of Verma as Ui mod}
  Let $\lambda \in X^+$.
  Then, as a $\dot{\mathbf{U}}^\imath$-module, the Verma module $M(\lambda)$ is isomorphic to
  \[
    \dot{\mathbf{U}}^\imath 1_{\overline{\lambda}}/\sum_{j \in I_\bullet} \dot{\mathbf{U}}^\imath E_i 1_{\overline{\lambda}}.
  \]
\end{prop}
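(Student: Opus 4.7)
The plan is to invoke Proposition~\ref{prop: hwm is a cyclic Ui mod} to obtain a surjection from the stated quotient onto $M(\lambda)$ and then to identify its kernel via the PBW-type triangular decomposition of $\mathbf{U}^\imath$. Set
\[
  Q := \dot{\mathbf{U}}^\imath 1_{\overline{\lambda}} \Big/ \sum_{j \in I_\bullet} \dot{\mathbf{U}}^\imath E_j 1_{\overline{\lambda}}.
\]
Since $m_\lambda$ has $X^\imath$-weight $\overline{\lambda}$ and satisfies $E_j m_\lambda = 0$ for every $j \in I_\bullet$, the assignment $1_{\overline{\lambda}} \mapsto m_\lambda$ induces a well-defined $\dot{\mathbf{U}}^\imath$-module homomorphism $\bar{\phi}\colon Q \to M(\lambda)$. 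By Proposition~\ref{prop: hwm is a cyclic Ui mod} applied to $M(\lambda)$, the map $\bar{\phi}$ is surjective, so the task reduces to proving injectivity.

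By the PBW-type triangular decomposition of $\mathbf{U}^\imath$ established by Letzter~\cite{Let99} and extended to Kac-Moody type by Kolb~\cite{Kol14}, the multiplication map
\[
  \mathcal{U}^{\imath,-} \otimes \mathbf{U}^{\imath,0} \otimes \mathbf{U}^+_\bullet \;\stackrel{\sim}{\longrightarrow}\; \mathbf{U}^\imath
\]
is a vector space isomorphism, where $\mathcal{U}^{\imath,-}$ is the subalgebra of $\mathbf{U}^\imath$ generated by $\{B_i \mid i \in I\}$, $\mathbf{U}^{\imath,0} = \mathbb{Q}(q)[K_h \mid h \in Y^\imath]$, and $\mathbf{U}^+_\bullet$ is the subalgebra generated by $\{E_j \mid j \in I_\bullet\}$. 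Consequently the left ideal $\sum_{j \in I_\bullet} \mathbf{U}^\imath E_j$ coincides with $\mathcal{U}^{\imath,-} \cdot \mathbf{U}^{\imath,0} \cdot \mathbf{U}^+_{\bullet,>0}$ (the ordering of the decomposition places $\mathbf{U}^+_\bullet$ on the right, so the $E_j$-factors can be absorbed there). Using that $\mathbf{U}^{\imath,0}$ acts on $1_{\overline{\lambda}}$ by the scalars $q^{\langle h,\overline{\lambda}\rangle}$, we obtain a vector space identification $Q \cong \mathcal{U}^{\imath,-} \cdot 1_{\overline{\lambda}}$, under which $\bar{\phi}$ becomes the evaluation map $u \mapsto u m_\lambda$ from $\mathcal{U}^{\imath,-}$ to $M(\lambda)$. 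Injectivity of $\bar{\phi}$ is therefore equivalent to bijectivity of this evaluation map.

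Bijectivity is established by a leading-term argument. Since $B_j = F_j$ for $j \in I_\bullet$ and $B_i = F_i + \varsigma_i T_{w_\bullet}(E_{\tau(i)}) K_i^{-1} + \kappa_i K_i^{-1}$ for $i \in I_\circ$, the extra summands lie in $\mathbf{U} \cdot \mathbf{U}^+_{>0}$ or in the scalar part; moreover, for $i \in I_\circ$ we have $T_{w_\bullet}(E_{\tau(i)}) m_\lambda = 0$ since $\lambda + w_\bullet \alpha_{\tau(i)} \notin \lambda - Q^+$. With respect to a suitable filtration on $\mathcal{U}^{\imath,-}$ and on $M(\lambda)$ (for instance, by the positive-$E$-degree of $\mathbf{U}$ refined by height of weight), an ordered PBW monomial $B_{i_1}\cdots B_{i_r}$ applied to $m_\lambda$ has leading term $F_{i_1}\cdots F_{i_r} m_\lambda$, and since $\{F_{i_1}\cdots F_{i_r} m_\lambda\}$ is a basis of $M(\lambda) \cong \mathbf{U}^-$, an upper-triangularity argument yields that the evaluation map is a vector space isomorphism.

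The principal obstacle is the last step, where one must organize the leading-term bookkeeping into a filtration that is simultaneously compatible with the triangular decomposition of $\mathbf{U}^\imath$ and with the action on $M(\lambda)$, and verify that lower-order terms in the expansion of $B_{i_1}\cdots B_{i_r} m_\lambda$ lie in the span of PBW monomials with strictly fewer $F$-factors. This argument is carried out essentially verbatim in the quasi-split setting in \cite[Lemma 6.2.2]{W21}, and the same reasoning applies in the present locally finite setting, as it relies only on the explicit form of $B_i$ and on Kolb's triangular decomposition.
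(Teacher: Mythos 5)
Your argument follows the same route the paper takes: the paper's proof is a one-line reference to \cite[Proposition 6.2]{Kol14} (the PBW/quantum Iwasawa decomposition of $\mathbf{U}^\imath$) together with the argument from \cite[Lemma 6.2.2]{W21}, and your proposal reconstructs exactly that argument -- surjectivity from Proposition~\ref{prop: hwm is a cyclic Ui mod}, then Kolb's triangular decomposition plus an upper-triangular leading-term comparison of $B_J m_\lambda$ with $F_J m_\lambda$ to get injectivity. One small caution: the clean statement you want from \cite{Kol14} is a basis theorem (ordered monomials $B_J$ form a basis of $\mathbf{U}^\imath$ as a free right module over the Cartan part times $\mathbf{U}^+_\bullet$), rather than a literal tensor-product factorization with the subalgebra $\mathcal{U}^{\imath,-}$ generated by all $B_i$, and correspondingly the $F$-monomials $F_J m_\lambda$ form a basis of $M(\lambda)$ only when $J$ ranges over the same ordered index set; with that reading your leading-term argument goes through as intended.
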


\begin{proof}
  The assertion follows from \cite[Proposition 6.2]{Kol14} and the argument in the proof of \cite[Lemma 6.2.2]{W21}.
\end{proof}

From now on, until the end of this subsection, assume that
\begin{align}\label{eq: assump on kappa}
  \kappa_i = 0 \ \text{ for all } i \in I_\circ.
\end{align}

\begin{prop}[{\textit{cf}.\ \cite[Lemma 6.2.3]{W21}}]\label{prop: presen for int hwm as Ui mod}
  Let $\lambda \in X^+$.
  For each $i \in I$, set $\lambda_i := \langle h_i, \lambda \rangle$.
  Then, as a $\dot{\mathbf{U}}^\imath$-module, the integrable highest weight module $V(\lambda)$ is isomorphic to
  \[
    \dot{\mathbf{U}}^\imath 1_{\overline{\lambda}}/\left(\sum_{i \in I} \dot{\mathbf{U}}^\imath B_{i, \overline{\lambda}}^{(\lambda_i+1)} + \sum_{j \in I_\bullet} \dot{\mathbf{U}}^\imath E_i 1_{\overline{\lambda}} \right),
  \]
  where
  \[
    B_{i,\overline{\lambda}}^{(\lambda_i+1)} := \begin{cases}
      \frac{1}{[\lambda_i+1]_i!}B_i^{\lambda_i+1} 1_{\overline{\lambda}} & \text{ if } i \notin I_\circ^{\tau, \bullet}, \\
      \frac{1}{[\lambda_i+1]_i!}\left( \prod_{k=0}^{\lambda_i} (B_i - [-\lambda_i + 2k]_i) \right) 1_{\overline{\lambda}} & \text{ if } i \in I_\circ^{\tau, \bullet}.
    \end{cases}
  \]
\end{prop}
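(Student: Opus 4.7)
The plan is to bootstrap from the Verma-module presentation (Proposition \ref{prop: presen of Verma as Ui mod}) together with the classical quotient $V(\lambda) \cong M(\lambda)/\sum_{i\in I}\mathbf{U} F_i^{(\lambda_i+1)} m_\lambda$. After substitution, it suffices to prove the equality of $\dot{\mathbf{U}}^\imath$-submodules
\[
  \sum_{i\in I}\mathbf{U} F_i^{(\lambda_i+1)} m_\lambda \;=\; \sum_{i\in I}\dot{\mathbf{U}}^\imath B_{i,\overline{\lambda}}^{(\lambda_i+1)} m_\lambda
\]
inside $M(\lambda)$.

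First I would observe that $F_i^{(\lambda_i+1)} m_\lambda$ is itself a $\mathbf{U}$-highest-weight vector of $M(\lambda)$: this is routine from the standard commutators $[E_j, F_i^{(n)}]$ together with $E_j m_\lambda = 0$. Hence the $\mathbf{U}$-submodule it generates is highest-weight, and Proposition \ref{prop: hwm is a cyclic Ui mod} yields $\mathbf{U} F_i^{(\lambda_i+1)} m_\lambda = \dot{\mathbf{U}}^\imath F_i^{(\lambda_i+1)} m_\lambda$. The target identity rewrites as $\sum_i \dot{\mathbf{U}}^\imath F_i^{(\lambda_i+1)} m_\lambda = \sum_i \dot{\mathbf{U}}^\imath B_{i,\overline{\lambda}}^{(\lambda_i+1)} m_\lambda$. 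For $i\in I_\bullet$ the two summands coincide since $B_i = F_i$, so $B_{i,\overline{\lambda}}^{(\lambda_i+1)} m_\lambda = F_i^{(\lambda_i+1)} m_\lambda$. For $i\in I_\circ$, the inclusion $\supseteq$ is equivalent, after descending to $V(\lambda)$, to the vanishing $B_{i,\overline{\lambda}}^{(\lambda_i+1)} v_\lambda = 0$, while the reverse inclusion follows by inverting the triangular relation $B_i = F_i + \varsigma_i T_{w_\bullet}(E_{\tau(i)}) K_i^{-1}$ (using the standing hypothesis $\kappa_i = 0$) along the $F_i$-chain through $v_\lambda$.

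The main obstacle is the vanishing $B_{i,\overline{\lambda}}^{(\lambda_i+1)} v_\lambda = 0$ for $i\in I_\circ$. For $i\notin I_\circ^{\tau,\bullet}$ it amounts to $B_i^{\lambda_i+1} v_\lambda = 0$, which I would prove by expanding $B_i^{\lambda_i+1}$ in the alphabet $\{F_i,\, T_{w_\bullet}(E_{\tau(i)}) K_i^{-1}\}$: the all-$F_i$ summand equals $[\lambda_i+1]_i!\, F_i^{(\lambda_i+1)} v_\lambda = 0$, while the remaining mixed summands live in strictly smaller weight spaces $\lambda - (\lambda_i+1)\alpha_i - k\beta$ (with $k\ge 1$ and $\beta$ in the $\mathbb{Z}_{>0}$-span of $\{\alpha_j : j\in I_\bullet\}$) and can be eliminated by a direct commutator analysis exploiting $E_j v_\lambda = 0$ for $j\in I_\bullet$ together with the Serre relations. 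For $i\in I_\circ^{\tau,\bullet}$ the orthogonality of $I_\bullet$ to $i$ gives $T_{w_\bullet}(E_i) = E_i$, so $B_i = F_i + \varsigma_i E_i K_i^{-1}$ is the standard $\imath\mathfrak{sl}_2$-generator; the prescribed product $\prod_{k=0}^{\lambda_i}(B_i - [-\lambda_i + 2k]_i)$ is precisely the polynomial annihilator of a weight-$\lambda_i$ highest-weight vector of the corresponding $\imath\mathfrak{sl}_2$-module, reducing the verification to a rank-one identity parallel to the one in \cite[Lemma 6.2.3]{W21}.
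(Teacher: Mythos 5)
Your high-level strategy---passing through Proposition~\ref{prop: presen of Verma as Ui mod} and the classical quotient $V(\lambda) = M(\lambda)/\sum_i \mathbf{U} F_i^{(\lambda_i+1)}m_\lambda$, invoking Proposition~\ref{prop: hwm is a cyclic Ui mod} to replace $\mathbf{U}$ by $\dot{\mathbf{U}}^\imath$, and reducing to an equality of $\dot{\mathbf{U}}^\imath$-submodules of $M(\lambda)$---is the route the paper takes via \cite[Lemma~6.2.3]{W21}, so the architecture is right. But the weight bookkeeping for $i\in I_\circ\setminus I_\circ^{\tau,\bullet}$ contains a genuine error, and the "$\subseteq$" step is too loosely stated.

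The error: the operator $\varsigma_i T_{w_\bullet}(E_{\tau(i)})K_i^{-1}$ is homogeneous of weight $w_\bullet\alpha_{\tau(i)}$, a \emph{positive} root, not of negative weight. Replacing $b\ge 1$ of the $F_i$-factors in $B_i^{\lambda_i+1}$ by it therefore produces a summand of weight $\lambda - (\lambda_i+1-b)\alpha_i + b\,w_\bullet\alpha_{\tau(i)}$, not $\lambda - (\lambda_i+1)\alpha_i - k\beta$ as you wrote. For $i\notin I_\circ^{\tau,\bullet}$ these weights are not of the form $\lambda$ minus a nonnegative integral combination of simple roots: if $\tau(i)\ne i$ the coefficient of $\alpha_{\tau(i)}$ is $+b>0$, and if $\tau(i)=i$ the coefficient of some $\alpha_j$ with $j\in I_\bullet$ is strictly positive since $w_\bullet\alpha_i\ne\alpha_i$. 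So these summands sit in zero weight spaces of the Verma module $M(\lambda)$ itself; the vanishing is a pure weight count, requires no Serre relations or commutator gymnastics, and already holds before descending to $V(\lambda)$.

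This observation also tightens your "$\subseteq$" step. The weight count gives the exact identity $B_{i,\overline{\lambda}}^{(\lambda_i+1)}m_\lambda = F_i^{(\lambda_i+1)}m_\lambda$ in $M(\lambda)$ for every $i\notin I_\circ^{\tau,\bullet}$, and the rank-one $\imath\mathfrak{sl}_2$ computation you cite (noting that \eqref{eq: constr for paramas of qsp 3}--\eqref{eq: constr for paramas of qsp 4} pin $\varsigma_i$ down to $\pm q_i^{-1}$, with the displayed eigenvalues $[-\lambda_i+2k]_i$ corresponding to $\varsigma_i=q_i^{-1}$) gives the same identity for $i\in I_\circ^{\tau,\bullet}$. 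With $B_{i,\overline{\lambda}}^{(\lambda_i+1)}m_\lambda = F_i^{(\lambda_i+1)}m_\lambda$ for all $i$, the two $\dot{\mathbf{U}}^\imath$-submodules $\sum_i\dot{\mathbf{U}}^\imath B_{i,\overline{\lambda}}^{(\lambda_i+1)}m_\lambda$ and $\sum_i\dot{\mathbf{U}}^\imath F_i^{(\lambda_i+1)}m_\lambda$ coincide outright, giving both inclusions at once and replacing the vague "inverting the triangular relation along the $F_i$-chain" by a concrete computation.
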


\begin{proof}
  The proof is the same as \cite[Lemma 6.2.3]{W21} once we establish Propositions \ref{prop: hwm is a cyclic Ui mod} and \ref{prop: presen of Verma as Ui mod}.
\end{proof}

\begin{rem}\normalfont
  The elements $B_{i,\overline{\lambda}}^{(\lambda_i+1)}$ in Proposition \ref{prop: presen for int hwm as Ui mod} for $i \in I_\circ^{\tau, \bullet}$ are special cases of the $\imath$divided powers defined abstractly in \cite[Proposition 4.11]{BW18a}.
  Explicit formulas of the $\imath$divided powers were conjectured in \cite[Conjecture 4.13]{BW18a} and proved in \cite{BeW18}.
\end{rem}

\begin{lem}\label{lem: compare def rels for int hwm}
  Let $\lambda, \mu \in X^+$ be such that $\overline{\lambda} = \overline{\mu}$.
  Let $i \in I$ be such that $\langle h_i, \mu \rangle \leq \langle h_i, \lambda \rangle$.
  Then, we have
  \[
    \dot{\mathbf{U}}^\imath B_{i,\overline{\lambda}}^{(\langle h_i, \lambda \rangle+1)} \subseteq \dot{\mathbf{U}}^\imath B_{i,\overline{\mu}}^{(\langle h_i, \mu \rangle + 1)}.
  \]
\end{lem}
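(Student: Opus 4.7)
The plan is to reduce the claimed inclusion of left ideals to the single containment $B_{i,\overline{\lambda}}^{(\lambda_i+1)} \in \dot{\mathbf{U}}^\imath B_{i,\overline{\mu}}^{(\mu_i+1)}$, writing $\lambda_i := \langle h_i, \lambda \rangle$ and $\mu_i := \langle h_i, \mu \rangle$, and then to treat the two branches in the definition of $B_{i,\overline{\lambda}}^{(\lambda_i+1)}$ separately. In both branches, the hypothesis $\overline{\lambda} = \overline{\mu}$ enters through the identity $1_{\overline{\lambda}} = 1_{\overline{\mu}}$ of idempotents in $\dot{\mathbf{U}}^\imath$, which allows one to freely swap the right-hand idempotent.

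When $i \notin I_\circ^{\tau, \bullet}$, I would invoke the tautological divided-power identity
\[
B_i^{(\lambda_i - \mu_i)} B_i^{(\mu_i+1)} = \binom{\lambda_i+1}{\mu_i+1}_{\!i} B_i^{(\lambda_i+1)}
\]
in $\mathbf{U}^\imath$ (which follows from $B_i^a B_i^b = B_i^{a+b}$) and invert the nonzero $q$-binomial coefficient in $\mathbb{Q}(q)$. Multiplying on the right by $1_{\overline{\mu}}$ then exhibits $B_i^{(\lambda_i+1)} 1_{\overline{\lambda}}$ as a $\mathbb{Q}(q)$-multiple of $B_i^{(\lambda_i - \mu_i)} \cdot B_{i,\overline{\mu}}^{(\mu_i+1)}$, which lies in $\dot{\mathbf{U}}^\imath B_{i,\overline{\mu}}^{(\mu_i+1)}$.

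When $i \in I_\circ^{\tau, \bullet}$, I would first verify the parity $\lambda_i \equiv \mu_i \pmod{2}$. Since $\tau(i) = i$ and $w_\bullet h_i = h_i$ by the definition of $I_\circ^{\tau,\bullet}$, writing $\lambda - \mu = \nu + w_\bullet \tau(\nu)$ for some $\nu \in X$ and applying $W$-invariance and $\tau$-invariance of the pairing yields $\langle h_i, \lambda - \mu \rangle = 2\langle h_i, \nu \rangle$. Denoting the polynomial appearing in the definition by $P_\lambda(B_i) := \prod_{k=0}^{\lambda_i}(B_i - [-\lambda_i + 2k]_i)$, this parity ensures that the scalar roots $[-\mu_i + 2k]_i$ of $P_\mu(B_i)$ form a contiguous sub-collection of the scalar roots of $P_\lambda(B_i)$. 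Since the linear factors commute, one obtains a factorization $P_\lambda(B_i) = Q(B_i) P_\mu(B_i)$ in $\mathbf{U}^\imath$, and applying this to $1_{\overline{\mu}} = 1_{\overline{\lambda}}$ expresses $B_{i,\overline{\lambda}}^{(\lambda_i+1)}$ as a $\mathbb{Q}(q)$-multiple of $Q(B_i) \cdot B_{i,\overline{\mu}}^{(\mu_i+1)}$, yielding the containment.

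The main, albeit mild, obstacle is the parity verification in the second case; the remaining factorizations and scalar manipulations are essentially formal. It is essential that $\dot{\mathbf{U}}^\imath$ be considered as a $\mathbb{Q}(q)$-algebra and not merely its $\mathcal{A}$-integral form ${}_\mathcal{A} \dot{\mathbf{U}}^\imath$, so that the $q$-binomial coefficient and the factorial ratio $[\mu_i+1]_i!/[\lambda_i+1]_i!$ can be inverted freely.
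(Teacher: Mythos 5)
Your proposal is correct and follows essentially the same route as the paper's proof: reduce to a single generator containment using $1_{\overline{\lambda}}=1_{\overline{\mu}}$, observe the non-$I_\circ^{\tau,\bullet}$ case is a straightforward power-of-$B_i$ factorization, and in the $I_\circ^{\tau,\bullet}$ case first establish $\lambda_i\equiv\mu_i\pmod 2$ from $\lambda-\mu=\nu+w_\bullet\tau(\nu)$ and then factor the polynomial $P_\lambda(B_i)$ through $P_\mu(B_i)$. You have merely filled in the parity computation and the $q$-binomial bookkeeping that the paper leaves implicit ("the claim is clear" and "follows from the definition of $\imath$divided powers").
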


\begin{proof}
  It suffices to show that
  \[
    B_{i, \overline{\lambda}}^{\langle h_i, \lambda \rangle+1} \in \dot{\mathbf{U}}^\imath B_{i, \overline{\mu}}^{\langle h_i, \mu \rangle+1} \ \text{ for all } i \in I.
  \]
  When $i \notin I_\circ^{\tau, \bullet}$, the claim is clear.
  Hence, suppose that $i \in I_\circ^{\tau, \bullet}$.
  Since $\overline{\lambda} = \overline{\mu}$, there exists $\nu \in X$ such that
  \[
    \lambda - \mu = \nu + w_\bullet \tau(\nu).
  \]
  Hence, we have
  \[
    \langle h_i, \lambda-\mu \rangle = 2\langle h_i, \nu \rangle \in 2\mathbb{Z}.
  \]
  Then, the claim follows from the definition of $\imath$divided powers.
\end{proof}

\begin{prop}\label{prop: pii lm mu}
  Let $\lambda, \mu \in X^+$ be such that $\overline{\lambda} = \overline{\mu}$ and $\langle h_i, \mu \rangle \leq \langle h_i, \lambda \rangle$ for all $i \in I$.
  Then, there exists a unique $\mathbf{U}^\imath$-module homomorphism
  \[
    \pi^{\imath, \lambda}_\mu: V(\lambda) \rightarrow V(\mu)
  \]
  such that
  \[
    \pi^{\imath, \lambda}_\mu(v_\lambda) = v_\mu.
  \]
\end{prop}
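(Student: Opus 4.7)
The plan is to combine the presentation of integrable highest weight modules as $\dot{\mathbf{U}}^\imath$-modules given by Proposition \ref{prop: presen for int hwm as Ui mod} with the containment of defining relations established in Lemma \ref{lem: compare def rels for int hwm}. Since $\overline{\lambda} = \overline{\mu}$, the idempotents $1_{\overline{\lambda}}$ and $1_{\overline{\mu}}$ coincide in $\dot{\mathbf{U}}^\imath$, so both $V(\lambda)$ and $V(\mu)$ arise as quotients of the same cyclic $\dot{\mathbf{U}}^\imath$-module $\dot{\mathbf{U}}^\imath 1_{\overline{\lambda}}$.

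Writing $\lambda_i := \langle h_i, \lambda \rangle$ and $\mu_i := \langle h_i, \mu \rangle$, Proposition \ref{prop: presen for int hwm as Ui mod} identifies these quotients with $\dot{\mathbf{U}}^\imath 1_{\overline{\lambda}}/K_\lambda$ and $\dot{\mathbf{U}}^\imath 1_{\overline{\lambda}}/K_\mu$, where
\[
K_\lambda := \sum_{i \in I} \dot{\mathbf{U}}^\imath B_{i, \overline{\lambda}}^{(\lambda_i+1)} + \sum_{j \in I_\bullet} \dot{\mathbf{U}}^\imath E_j 1_{\overline{\lambda}},
\]
and $K_\mu$ is defined analogously. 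The $E_j$-summands coincide on the nose because $\overline{\lambda} = \overline{\mu}$; for the $B_i$-summands, the hypothesis $\mu_i \leq \lambda_i$ together with Lemma \ref{lem: compare def rels for int hwm} yields $\dot{\mathbf{U}}^\imath B_{i, \overline{\lambda}}^{(\lambda_i+1)} \subseteq \dot{\mathbf{U}}^\imath B_{i, \overline{\mu}}^{(\mu_i+1)}$ for every $i \in I$. Hence $K_\lambda \subseteq K_\mu$, so the canonical surjection $\dot{\mathbf{U}}^\imath 1_{\overline{\lambda}} \twoheadrightarrow V(\mu)$ factors through $V(\lambda)$, producing a $\dot{\mathbf{U}}^\imath$-module (equivalently, $\mathbf{U}^\imath$-module) homomorphism $\pi^{\imath, \lambda}_\mu: V(\lambda) \to V(\mu)$ that sends $v_\lambda$ to $v_\mu$ by construction. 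Uniqueness is immediate from Proposition \ref{prop: hwm is a cyclic Ui mod}: since $V(\lambda) = \dot{\mathbf{U}}^\imath v_\lambda$, any $\mathbf{U}^\imath$-homomorphism out of $V(\lambda)$ is determined by the image of $v_\lambda$.

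No substantial obstacle arises at this stage. All the real work, in particular the crucial parity observation $\langle h_i, \lambda - \mu \rangle \in 2\mathbb{Z}$ for $i \in I_\circ^{\tau, \bullet}$ (which is what makes the $\imath$divided powers transform correctly under the comparison), is already absorbed in Lemma \ref{lem: compare def rels for int hwm}, so the proposition is essentially an assembly of previously established results. The only point to verify with care is that the hypothesis $\kappa_i = 0$ imposed in \eqref{eq: assump on kappa} remains in force here, which is the regime in which Proposition \ref{prop: presen for int hwm as Ui mod} applies.
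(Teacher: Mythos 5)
Your proof is correct and is an explicit unpacking of exactly the route the paper takes: the paper's proof simply cites Proposition \ref{prop: presen for int hwm as Ui mod} and Lemma \ref{lem: compare def rels for int hwm}, and you have correctly spelled out how the comparison of kernels $K_\lambda \subseteq K_\mu$ yields the factorization, with uniqueness from cyclicity via Proposition \ref{prop: hwm is a cyclic Ui mod}. Your closing remark about assumption \eqref{eq: assump on kappa} being in force is also correct, as the paper imposes it for the remainder of that subsection.
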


\begin{proof}
  The assertion follows from Proposition \ref{prop: presen for int hwm as Ui mod} and Lemma \ref{lem: compare def rels for int hwm}.
\end{proof}

\begin{cor}\label{cor: suff cond for Vlm containing triv submod}
  Let $\lambda \in X^+$ be such that $\overline{\lambda} = \overline{0}$.
  Then, there exists a $\mathbf{U}^\imath$-submodule of $V(\lambda)$ isomorphic to the trivial module $V(0)$.
\end{cor}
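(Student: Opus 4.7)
The plan is to combine Proposition \ref{prop: pii lm mu} (with $\mu = 0$) and Corollary \ref{cor: V(lm) as Ui-mod is comp red} (complete reducibility of $V(\lambda)$ as a $\mathbf{U}^\imath$-module). The existence of a map onto $V(0)$ gives a surjection, and complete reducibility converts this surjection into a splitting, producing the desired trivial submodule.

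More precisely, first I would verify that the hypotheses of Proposition \ref{prop: pii lm mu} are met for the pair $(\lambda, 0)$: by assumption $\overline{\lambda} = \overline{0}$, and since $\lambda \in X^+$ we have $\langle h_i, \lambda \rangle \geq 0 = \langle h_i, 0 \rangle$ for all $i \in I$. The proposition then furnishes a $\mathbf{U}^\imath$-module homomorphism
\[
  \pi^{\imath,\lambda}_0 : V(\lambda) \rightarrow V(0)
\]
with $\pi^{\imath,\lambda}_0(v_\lambda) = v_0$. Since $V(0)$ is one-dimensional and $v_0 \neq 0$, this homomorphism is surjective.

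Next, by Corollary \ref{cor: V(lm) as Ui-mod is comp red}, the $\mathbf{U}^\imath$-module $V(\lambda)$ is completely reducible. Hence the short exact sequence
\[
  0 \rightarrow \ker(\pi^{\imath,\lambda}_0) \rightarrow V(\lambda) \rightarrow V(0) \rightarrow 0
\]
of $\mathbf{U}^\imath$-modules splits, yielding a $\mathbf{U}^\imath$-submodule of $V(\lambda)$ isomorphic to $V(0)$.

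There is no real obstacle here; the work has been done in the preceding propositions. The only point worth double-checking is that the running assumption $\kappa_i = 0$ for all $i \in I_\circ$ (equation \eqref{eq: assump on kappa}) is in force, since this assumption underlies Proposition \ref{prop: presen for int hwm as Ui mod} and hence Proposition \ref{prop: pii lm mu}; this is fine, as the corollary sits within the same subsection.
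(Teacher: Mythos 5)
Your proof is correct and is essentially the same as the paper's: both apply Proposition~\ref{prop: pii lm mu} with $\mu = 0$ to get a nonzero $\mathbf{U}^\imath$-module map $V(\lambda) \to V(0)$, then invoke Corollary~\ref{cor: V(lm) as Ui-mod is comp red} to split it off as a submodule. Your write-up simply spells out the hypothesis check and the splitting in more detail than the paper does.
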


\begin{proof}
  By Proposition \ref{prop: pii lm mu}, there exists a nonzero $\mathbf{U}^\imath$-module homomorphism from $V(\lambda)$ to $V(0)$.
  Since $V(\lambda)$ is completely reducible by Corollary \ref{cor: V(lm) as Ui-mod is comp red}, this implies the assertion.
\end{proof}

\begin{rem}\normalfont
  Assumption \eqref{eq: assump on kappa} can be weakened; for example, when $\kappa_i = [s_i]_i$ for some $s_i \in \mathbb{Z}$ for all $i \in I_\circ^{\tau, \bullet}$, we only need to modify $B_{i, \overline{\lambda}}^{(\lambda_i+1)}$ as
  \[
    \frac{1}{[\lambda_i+1]_i!} \left( \prod_{k=0}^{\lambda_i} (B_i - [s_i-\lambda_i+2k]_i) \right) 1_{\overline{\lambda}}.
  \] 
\end{rem}

\subsection{Stability conjecture}\label{subsect: stab conj}
The notions of based modules and based homomorphisms for the $\imath$quantum groups are defined in the same way as those for the quantum groups.

For each $\lambda, \mu, \nu \in X^+$, let $V^\imath(\lambda,\mu; \nu)$ denote the $\mathbf{U}^\imath$-submodule of $V(\lambda + \tau\nu) \otimes V(\mu + \nu)$ generated by the vector
\[
  v^\imath_{\lambda, \mu; \nu} := v_{w_\bullet(\lambda + \tau\nu)} \otimes v_{\mu + \nu}.
\]
We abbreviate $V^\imath(\lambda, \mu; 0)$ and $v^\imath_{\lambda, \mu; 0}$ as $V^\imath(\lambda, \mu)$ and $v^\imath_{\lambda, \mu}$ respectively.

By \cite[Proposition 7.1 (2)]{BW21}, there exists a unique $\mathbf{U}^\imath$-module homomorphism
\[
  \pi^\imath_{\lambda,\mu;\nu}: V^\imath(\lambda, \mu; \nu) \rightarrow V^\imath(\lambda, \mu)
\]
such that
\[
  \pi^\imath_{\lambda, \mu; \nu}(v^\imath_{\lambda, \mu; \nu}) = v^\imath_{\lambda, \mu}.
\]

For each $\lambda,\mu \in X^+$, by the definition of $V^\imath(\lambda,\mu;\nu)$, there exists a unique $\mathbf{U}^\imath$-module homomorphism
\[
  \dot{\pi}^\imath_{\lambda,\mu;\nu}: \dot{\mathbf{U}}^\imath \rightarrow V^\imath(\lambda,\mu;\nu);\ x \mapsto x v^\imath_{\lambda,\mu;\nu}.
\]

Now, we are ready to state the \emph{stability conjecture}.

\begin{conj}[{Stability Conjecture \cite[Remark 6.18]{BW18}}]\label{conj: stability}
  There exist parameters ${\boldsymbol \varsigma}$ and ${\boldsymbol \kappa}$ such that the $\mathbf{U}^\imath$-module homomorphisms $\dot{\pi}^\imath_{\lambda, \mu;\nu}$ are based for all $\lambda,\mu,\nu \in X^+$.
\end{conj}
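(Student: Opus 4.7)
The plan is to follow the route outlined in the introduction: reduce the stability conjecture to an alignment statement about trivial $\mathbf{U}^\imath$-submodules of $V(\sigma)$ at $q = \infty$, and then prove that statement by localizing the question to the real rank $1$ components, which are of finite type by the locally finite hypothesis.

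First, I would invoke Bao-Wang's asymptotic basedness from \cite{BW21}: the homomorphisms $\pi^\imath_{\lambda,\mu;\nu}$ and $\dot{\pi}^\imath_{\lambda,\mu;\nu}$ are already based once $\nu$ is sufficiently dominant, and the obstruction to exact basedness is concentrated in the trivial $\mathbf{U}^\imath$-summands of $V(\sigma)$ for $\sigma \in X^+$ with $\overline{\sigma} = \overline{0}$. More precisely, the homomorphisms of Conjecture \ref{conj: stability} are all based as soon as, for every such $\sigma$ and a suitable choice of parameters ${\boldsymbol \varsigma}, {\boldsymbol \kappa}$, one can produce a generator $v^\imath_\sigma$ of a trivial $\mathbf{U}^\imath$-summand of $V(\sigma)$ satisfying $v^\imath_\sigma \equiv_\infty v_\sigma$. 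Existence of the trivial summand is already guaranteed by Corollary \ref{cor: suff cond for Vlm containing triv submod}; only the alignment at $q = \infty$ remains.

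The central technical step is then the following characterization of $\mathbf{U}^\imath$-fixed vectors at $q = \infty$: any nonzero weight vector $v \in \mathcal{L}(\sigma)$ with $E_j v = 0$ for all $j \in I_\bullet$ and $B_i v = 0$ for all $i \in I_\circ$ must satisfy $v \equiv_\infty c\, v_\sigma$ for some $c \in \mathbb{Q}^\times$. By Example \ref{ex: characterization of the hwv at infty}, it suffices to show $\tilde{E}_i v \equiv_\infty 0$ for every $i \in I$. The case $i \in I_\bullet$ is immediate from $E_j v = 0$. For $i \in I_\circ$, the operator $B_i = F_i + \varsigma_i T_{w_\bullet}(E_{\tau(i)})K_i^{-1} + \kappa_i K_i^{-1}$ is supported on the real rank $1$ component $I_i$ of Definition \ref{def: related to adm pair}, which is of finite type by hypothesis. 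Hence, when restricted to the quantum subalgebra attached to $I_i$, the module $V(\sigma)$ decomposes by Theorem \ref{thm: comp red for fin} into finite-dimensional integrable highest weight modules, and one can analyze $B_i v = 0$ inside each such summand using the finite-type results from \cite{W21,W23}.

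The main obstacle is the implication ``$B_i v = 0 \Rightarrow \tilde{E}_i v \equiv_\infty 0$'' for $i \in I_\circ^{\tau,\bullet}$, where all three summands of $B_i v$ land in the same weight space and must cancel at $q = \infty$; the case $i \in I_\circ \setminus I_\circ^{\tau,\bullet}$ is comparatively easy because those summands live in distinct weight spaces. In the hard case, the leading $q^{-1}$-adic coefficients of $\varsigma_i$ and the precise value of $\kappa_i$ govern the cancellation, and the constraints \eqref{eq: constr for paramas of qsp 1}--\eqref{eq: constr for paramas of qsp 6}, combined with the classification of real rank $1$ admissible pairs of finite type, must be used to verify that a suitable choice of parameters forces the leading behavior of $v$ to align with $v_\sigma$. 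Once this characterization is established, one picks $v^\imath_\sigma$ as the essentially unique $\mathbf{U}^\imath$-fixed vector in $V(\sigma)$ whose leading term at $q = \infty$ is $v_\sigma$, completing the proof of the stability conjecture.
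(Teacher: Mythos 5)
Your plan reproduces the paper's own route: reduce via Theorem~\ref{thm: suff cond} to the alignment claim of Conjecture~\ref{conj: suff cond (strong)}, use uniqueness of the trivial $\mathbf{U}^\imath$-summand (Lemma~\ref{lem: triv mod in cyclic mod}, Proposition~\ref{prop: triv mod generator}), localize to the real rank $1$ components $I_i$, and deduce the alignment from the finite-type rank-one results of \cite{W23} via Corollary~\ref{cor: characterization of triv mod at infty for irr fin rk 1} and Example~\ref{ex: characterization of the hwv at infty}. So the structure is correct; two points, however, are off or under-argued. First, the opening claim that Bao--Wang's asymptotic stability makes the maps $\pi^\imath_{\lambda,\mu;\nu}$ ``already based once $\nu$ is sufficiently dominant'' misstates their result: asymptotic stability concerns the eventual matching of individual $\imath$canonical basis elements, not exact basedness of the homomorphisms, which in general fails for all $\nu$ unless the parameters are tuned --- that is precisely the content of the conjecture, so it cannot be assumed up front. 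Second, the ``hard case'' paragraph correctly identifies where the parameter constraints must enter, but it leaves the rank-one verification as a to-do; the paper handles this precisely in Lemma~\ref{lem: proof of conj for irr fin rk 1} by pinning down explicit values of $\varsigma_i$ for each of the eight rank-one types and invoking \cite[Proposition 4.6.1]{W23}, and it also needs the intermediate step of Proposition~\ref{prop: proof of conj for irr fin rk 1 general}, which passes from the auxiliary root datum $(Y',X')$ to an arbitrary Satake datum --- a reduction your sketch omits but which is necessary because the cited result of \cite{W23} is proved for a specific lattice.
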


\begin{rem}\normalfont
  By \cite[Theorem 7.2 (1)]{BW21} (see also \cite[Theorem 6.17 (1)]{BW18}), the stability conjecture is equivalent to say that there exist parameters ${\boldsymbol \varsigma}$ and ${\boldsymbol \kappa}$ such that the $\mathbf{U}^\imath$-module homomorphisms $\pi^\imath_{\lambda,\mu;\nu}$ are based for all $\lambda,\mu,\nu \in X^+$.
\end{rem}

Conjecture \ref{conj: stability} has been proved to be true for all admissible pairs of locally finite type with $I_\bullet = \emptyset$ in \cite[Theorem 6.2.8]{W21} by means of the $\imath$crystal base theory introduced in {\it loc.\ cit.}, and for all admissible pairs which are irreducible, of finite type, and of real rank $1$ in \cite[Corollary 4.6.3]{W23} by means of the (ordinary) crystal base theory and direct calculation based on the classification of such admissible pairs.

\section{Stability of $\imath$canonical bases}\label{sect: proof of stab conj}
In this section, we prove the stability conjecture (Conjecture \ref{conj: stability}) for all locally finite types.

First, we state a sufficient condition (Theorem \ref{thm: suff cond}) for the stability conjecture to be true.
Then, we state Conjecture \ref{conj: suff cond (strong)}, whose validity implies the sufficient condition above, and hence, the validity of the stability conjecture.
Finally, we prove Conjecture \ref{conj: suff cond (strong)} when the admissible pair is irreducible, of finite type, and of real rank $1$ in Subsection \ref{subsect: proof of conj for irr fin rk 1} and when the admissible pair is of locally finite type in Subsection \ref{subsect: prf loc fin}.

\subsection{Preliminaries}
In this subsection, the admissible pair is still arbitrary.

\begin{thm}\label{thm: suff cond}
  Let $\nu \in X^+$.
  Suppose that there exists a vector $w_0 \in \mathcal{L}(\nu + w_\bullet \tau(\nu))$ such that $\mathbf{U}^\imath w_0 \simeq V(0)$ and $w_0 \equiv_\infty v_{\nu + w_\bullet \tau(\nu)}$.
  Then, the $\mathbf{U}^\imath$-module homomorphism $\pi^\imath_{\lambda, \mu; \nu}$ is based for all $\lambda, \mu \in X^+$.
\end{thm}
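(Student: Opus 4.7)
The plan is to reduce the proof of the theorem to Bao–Wang's asymptotic stability result by using the hypothesis on $w_0$ to upgrade it to a genuine (non-asymptotic) stability statement. The structural input is that the existence of such a $w_0$ is precisely the obstruction identified in \cite{W21} and \cite{W23} as the final step for establishing the stability conjecture.

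First, I would exploit the complete reducibility of $V(\nu+w_\bullet\tau\nu)$ as a $\mathbf{U}^\imath$-module, which is available by Corollary \ref{cor: V(lm) as Ui-mod is comp red}. Together with the hypothesis $\mathbf{U}^\imath w_0 \simeq V(0)$, this produces a canonical $\mathbf{U}^\imath$-equivariant projection $\mathrm{pr}: V(\nu+w_\bullet\tau\nu) \twoheadrightarrow \mathbf{U}^\imath w_0 \simeq V(0)$. The condition $w_0 \equiv_\infty v_{\nu+w_\bullet\tau\nu}$ ensures that this projection is compatible with the crystal lattice at $q=\infty$, in particular $\mathrm{pr}(v_{\nu+w_\bullet\tau\nu}) \equiv_\infty w_0$ modulo $q^{-1}\mathcal{L}$.

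Next, I would use the Cartan-component embeddings $V(\lambda+\tau\nu) \hookrightarrow V(\tau\nu) \otimes V(\lambda)$ and $V(\mu+\nu) \hookrightarrow V(\mu) \otimes V(\nu)$ (or their mirror versions), together with the identification of the $\mathbf{U}$-submodule of $V(\tau\nu) \otimes V(\nu)$ containing the extremal vectors of weight $\pm w_\bullet\tau\nu$ (which contains $V(\nu+w_\bullet\tau\nu)$), to construct a $\mathbf{U}^\imath$-equivariant embedding of $V^\imath(\lambda,\mu)$ into $V^\imath(\lambda,\mu;\nu)$ that splits $\pi^\imath_{\lambda,\mu;\nu}$. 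Concretely, the image of the generator $v^\imath_{\lambda,\mu}$ should be a vector that ``looks like'' $v_{w_\bullet\lambda} \otimes w_0 \otimes v_\mu$ under the relevant embedding, and the generating vector $v^\imath_{\lambda,\mu;\nu} = v_{w_\bullet(\lambda+\tau\nu)} \otimes v_{\mu+\nu}$ should factor accordingly through the highest-weight product $v_{\tau\nu} \otimes v_\lambda \otimes v_\mu \otimes v_\nu$. The condition $w_0 \equiv_\infty v_{\nu+w_\bullet\tau\nu}$ is then precisely what is needed to verify that this section is compatible with the $\imath$canonical lattice $\mathcal{L}^\imath$ at $q=\infty$.

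Finally, I would combine the almost-based section constructed in the previous step with Bao–Wang's asymptotic stability of the $\imath$canonical bases $\mathbf{B}^\imath(\lambda,\mu;\nu)$ under the homomorphisms $\pi^\imath_{\lambda,\mu;\nu}$ (the content of \cite[Theorem 7.2]{BW21}). Asymptotic stability already gives ``based modulo $q^{-1}$-corrections''; the explicit, exact section given by $w_0$ then rigidifies these corrections and forces them to vanish, yielding that $\pi^\imath_{\lambda,\mu;\nu}$ is actually based. The main obstacle, I expect, will be the concrete identification of the section inside $V^\imath(\lambda,\mu;\nu)$: controlling how the extremal-weight vector $v_{w_\bullet(\lambda+\tau\nu)}$ decomposes along the Cartan embedding (so that the $V(\nu+w_\bullet\tau\nu)$-factor appears in the right place to apply the projection $\mathrm{pr}$) is the step that requires the most care, and it is where the assumption $w_0 \equiv_\infty v_{\nu+w_\bullet\tau\nu}$ enters essentially, rather than merely the assumption $\mathbf{U}^\imath w_0 \simeq V(0)$.
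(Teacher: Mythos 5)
The paper's own proof is a two-line citation to \cite[Corollary 3.4.8]{W23} together with the proof of \cite[Proposition 4.6.1]{W23}, so the technical argument lives in that earlier reference. Your outline correctly identifies several of the structural ingredients: complete reducibility supplies the projection onto $\mathbf{U}^\imath w_0$, the Cartan-component embeddings place a copy of $V(\nu+w_\bullet\tau\nu)$ in the middle of a four-fold tensor product, and the hypothesis $w_0 \equiv_\infty v_{\nu+w_\bullet\tau\nu}$ is indeed the essential $q=\infty$-level input. But your closing step contains a genuine gap.

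The issue is the role you assign to Bao--Wang's asymptotic stability. Asymptotic stability is a statement about the projective system: for fixed $\lambda,\mu$ and each basis element, the images under the transition maps $\pi^\imath_{\lambda,\mu;\nu}$ stabilize once $\nu$ is taken sufficiently far into the dominant cone; this is what produces the limit $\dot{\mathbf{B}}^\imath$. It says nothing at all about a fixed $\pi^\imath_{\lambda,\mu;\nu}$ being ``based modulo $q^{-1}$-corrections,'' nor is there any mechanism by which an exact $\mathbf{U}^\imath$-section would ``rigidify'' such corrections into vanishing. If asymptotic stability controlled $\pi^\imath_{\lambda,\mu;\nu}$ for small $\nu$ in that way, the stability conjecture would be automatic with no hypothesis on $w_0$ needed. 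The actual argument (carried out in \cite{W23}) instead proceeds through the characterization of $\imath$canonical bases by bar-invariance and $q=\infty$ behavior: $\pi^\imath_{\lambda,\mu;\nu}$ is already known from \cite{BW21} to intertwine $\imath$bar-involutions and preserve the $\mathcal{A}$-lattices, and what remains is to show that the induced map at $q=\infty$ sends the class of each $\imath$canonical basis element to the class of a basis element or to zero. This is precisely where the hypothesis $w_0 \equiv_\infty v_{\nu+w_\bullet\tau\nu}$, together with almost-orthonormality of the bases for control of the kernel, is used. The uniqueness of bar-invariant lifts from $\mathcal{L}/q^{-1}\mathcal{L}$ then upgrades this $q=\infty$-level compatibility to exact based-ness. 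That Lusztig-style uniqueness argument is the missing ingredient, and asymptotic stability is the wrong tool to fill it with.
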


\begin{proof}
  The assertion follows from \cite[Corollary 3.4.8]{W23} and the proof of \cite[Proposition 4.6.1]{W23}.
\end{proof}

\begin{lem}\label{lem: triv mod in cyclic mod}
  Let $M$ be a completely reducible weight $\mathbf{U}^\imath$-module generated by a weight vector $m \in M$ of weight $\zeta \in X^\imath$.
  Suppose that the multiplicity of the trivial module $V(0)$ in $M$ is nonzero.
  Then, we have $\zeta = \overline{0}$ and the multiplicity is $1$.
\end{lem}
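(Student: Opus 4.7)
Since $M$ is completely reducible, decompose it as
\[
  M = M_{\mathrm{triv}} \oplus N,
\]
where $M_{\mathrm{triv}}$ is the sum of all irreducible summands isomorphic to the trivial module $V(0)$ (so $M_{\mathrm{triv}} \cong V(0)^{\oplus k}$, with $k$ the multiplicity) and $N$ is the direct sum of the remaining summands. Both pieces are weight $\mathbf{U}^\imath$-submodules of $M$, and the hypothesis says $k \geq 1$. Write $m = m_0 + n$ accordingly, with $m_0 \in M_{\mathrm{triv}}$ and $n \in N$.

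For the first claim, note that every vector in $V(0)$ is of weight $\overline{0}$, so the weight space $(M_{\mathrm{triv}})_\zeta$ vanishes unless $\zeta = \overline{0}$. Thus if $\zeta \neq \overline{0}$, then $m_0 = 0$, so $m = n \in N$; but $N$ is a $\mathbf{U}^\imath$-submodule, whence $M = \mathbf{U}^\imath m \subseteq N$, contradicting $M_{\mathrm{triv}} \neq 0$. Therefore $\zeta = \overline{0}$, and in particular $m_0 \in (M_{\mathrm{triv}})_{\overline{0}}$.

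For the multiplicity, observe that $\mathbf{U}^\imath$ acts on each copy of $V(0)$ through the counit $\epsilon$, hence acts as scalars on $M_{\mathrm{triv}}$; in particular, $\mathbf{U}^\imath m_0 = \mathbb{Q}(q) m_0$ is at most one-dimensional. From
\[
  M = \mathbf{U}^\imath m \subseteq \mathbf{U}^\imath m_0 + \mathbf{U}^\imath n \subseteq \mathbb{Q}(q) m_0 + N,
\]
projecting onto the summand $M_{\mathrm{triv}}$ yields $M_{\mathrm{triv}} \subseteq \mathbb{Q}(q) m_0$. Hence $k = \dim_{\mathbb{Q}(q)} M_{\mathrm{triv}} \leq 1$, and together with $k \geq 1$ this gives $k = 1$. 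No step here presents a real obstacle: both assertions are formal consequences of complete reducibility, the fact that $V(0)$ is concentrated in weight $\overline{0}$, and the cyclicity of $M$ by the single weight vector $m$.
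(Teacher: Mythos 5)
Your proof is correct. The argument for $\zeta = \overline{0}$ is essentially the same as the paper's: decompose $m$ according to $M = M_{\mathrm{triv}} \oplus N$, note $M_{\mathrm{triv}}$ is concentrated in weight $\overline{0}$, and use cyclicity to rule out $m_0 = 0$. For the multiplicity, though, you take a cleaner route. The paper fixes a summand $N^{\omega_0}$, chooses $x^{\omega_0} \in \mathbf{U}^\imath$ with $x^{\omega_0} m = n^{\omega_0}$, and plays the directness of the decomposition against explicit $\mathbf{U}^\imath$-module isomorphisms $N^{\omega_0} \xrightarrow{\sim} N^\omega$ sending $n^{\omega_0} \mapsto n^\omega$ to force $\Omega$ to be a singleton. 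You instead observe directly that $\mathbf{U}^\imath$ acts on the entire trivial isotypic component through the counit $\epsilon$, hence by scalars, so $\mathbf{U}^\imath m_0 = \mathbb{Q}(q) m_0$ is a line; cyclicity then pins $M_{\mathrm{triv}}$ inside that line. Both arguments exploit the same underlying fact (the action on each copy of $V(0)$ is the same), but stating it as ``$\mathbf{U}^\imath$ acts by scalars on $M_{\mathrm{triv}}$'' avoids the contradiction-by-isomorphism bookkeeping and gives the bound $k \leq 1$ in one step, which is arguably the more economical formulation.
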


\begin{proof}
  Let $N \subseteq M$ denote the sum of all $\mathbf{U}^\imath$-submodules of $M$ isomorphic to the trivial module.
  Let us write
  \[
    N = \bigoplus_{\omega \in \Omega} N^\omega, \quad N^\omega \simeq V(0)
  \]
  for some index set $\Omega$.

  Since $M$ is completely reducible, we have
  \[
    M = N \oplus L
  \]
  for some $\mathbf{U}^\imath$-submodule $L$.
  According to this decomposition, let us write
  \[
    m = m_N + m_L.
  \]

  Let us first show that $\zeta = \overline{0}$.
  Since $m \in M_\zeta$, we have
  \[
    m = 1_\zeta m = 1_\zeta m_N + 1_\zeta m_L.
  \]
  Noting that $1_\zeta m_N \in N$ and $1_\zeta m_L \in L$, we see that
  \[
    1_\zeta m_N = m_N.
  \]
  On the other hand, since $N$ isomorphic to a sum of copies of the trivial module, we have
  \[
    1_\zeta m_N = \delta_{\zeta, \overline{0}} m_N.
  \]
  Therefore, we obtain
  \[
    \zeta = \overline{0}.
  \]

  Next, let us prove that $|\Omega| = 1$.
  Let us write
  \[
    m_N = \sum_{\omega \in \Omega} n^\omega, \quad n^\omega \in N^\omega.
  \]
  Let us fix $\omega_0 \in \Omega$.
  We shall show that $\omega_0 = \omega$ for all $\omega \in \Omega$.

  Since $M$ is cyclic, we must have $n^\omega \neq 0$, and there exists $x^\omega \in \mathbf{U}^\imath$ such that $xm = n^\omega$ for all $\omega \in \Omega$.
  As before, we see that $x^{\omega_0} n^\omega = \delta_{\omega_0, \omega} n^\omega$.
  On the other hand, there exists a $\mathbf{U}^\imath$-module isomorphism $f_{\omega_0,\omega}: N^{\omega_0} \rightarrow N^\omega$ mapping $n^{\omega_0}$ to $n^\omega$.
  Then, we have
  \[
    x^{\omega_0} n^{\omega} = f_{\omega_0, \omega}(x^{\omega_0} n^{\omega_0}) = f(n^{\omega_0}) = n^\omega.
  \]
  This implies that $\omega_0 = \omega$, as desired.
  Thus, the proof completes.
\end{proof}

\begin{prop}\label{prop: triv mod generator}
  Let $\lambda \in X^+$.
  The $\mathbf{U}^\imath$-module $V(\lambda)$ has a submodule isomorphic to the trivial module $V(0)$ if and only if $\overline{\lambda} = 0$.
  Moreover, if this is the case, then there exists a unique $($up to nonzero scalar multiple$)$ vector $w_0 \in V(\lambda)$ such that
  \[
    \mathbf{U}^\imath w_0 \simeq V(0).
  \]
\end{prop}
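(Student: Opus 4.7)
The plan is to assemble the structural results already established in the paper. The proposition has three parts: the ``only if'' direction, the ``if'' direction, and the uniqueness statement.

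For the \emph{only if} direction and uniqueness, I would start by observing that $V(\lambda)$ is cyclic as a $\mathbf{U}^\imath$-module, generated by the highest weight vector $v_\lambda$, thanks to Proposition \ref{prop: hwm is a cyclic Ui mod}. Moreover, $v_\lambda$ is a weight vector of weight $\overline{\lambda} \in X^\imath$, and by Corollary \ref{cor: V(lm) as Ui-mod is comp red}, $V(\lambda)$ is completely reducible as a $\mathbf{U}^\imath$-module. Thus the hypotheses of Lemma \ref{lem: triv mod in cyclic mod} are satisfied with $M = V(\lambda)$, $m = v_\lambda$ and $\zeta = \overline{\lambda}$. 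Applying that lemma, if $V(\lambda)$ contains a $\mathbf{U}^\imath$-submodule isomorphic to the trivial module, then $\overline{\lambda} = \overline{0}$ and the multiplicity of $V(0)$ in the complete decomposition of $V(\lambda)$ is exactly $1$. The uniqueness of $w_0$ up to a nonzero scalar then follows immediately: the trivial isotypic component is a single copy of $V(0)$, which is a one-dimensional subspace.

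For the \emph{if} direction, suppose $\overline{\lambda} = \overline{0}$. Corollary \ref{cor: suff cond for Vlm containing triv submod} directly produces a $\mathbf{U}^\imath$-submodule of $V(\lambda)$ isomorphic to $V(0)$, completing this implication.

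There is essentially no technical obstacle here; the proof is pure assembly of prior results. The only mild subtlety to flag is the dependence on the parameter assumption \eqref{eq: assump on kappa} used in Corollary \ref{cor: suff cond for Vlm containing triv submod}: if the proposition is meant to apply beyond that hypothesis, one invokes the remark following that corollary explaining how the same argument, with suitably modified $\imath$divided powers $B_{i,\overline{\lambda}}^{(\lambda_i+1)}$, still produces the required $\mathbf{U}^\imath$-module homomorphism $V(\lambda) \to V(0)$ whenever $\overline{\lambda} = \overline{0}$. Other than that, the argument is a short two-paragraph deduction from Proposition \ref{prop: hwm is a cyclic Ui mod}, Corollary \ref{cor: V(lm) as Ui-mod is comp red}, Lemma \ref{lem: triv mod in cyclic mod}, and Corollary \ref{cor: suff cond for Vlm containing triv submod}.
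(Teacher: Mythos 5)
Your proof is correct and follows essentially the same route as the paper's: the paper's proof cites Corollary \ref{cor: suff cond for Vlm containing triv submod} for the ``if'' direction and Lemma \ref{lem: triv mod in cyclic mod} (applicable because $V(\lambda)$ is cyclic over $\mathbf{U}^\imath$ and completely reducible by Corollary \ref{cor: V(lm) as Ui-mod is comp red}) for the ``only if'' direction and uniqueness, exactly as you do. Your flag about the dependence on assumption \eqref{eq: assump on kappa} for the ``if'' direction is a legitimate observation that the paper leaves implicit, since the subsequent applications all set $\kappa_i = 0$.
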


\begin{proof}
  The assertion follows from Corollary \ref{cor: suff cond for Vlm containing triv submod} and Lemma \ref{lem: triv mod in cyclic mod}; we can apply them since $V(\lambda)$ is completely reducible as a $\mathbf{U}^\imath$-module by Corollary \ref{cor: V(lm) as Ui-mod is comp red}.
\end{proof}

\begin{conj}\label{conj: suff cond (strong)}
  There exist parameters ${\boldsymbol \varsigma}$ and ${\boldsymbol \kappa}$ such that the following hold.
  Let $\lambda \in X^+$ be such that $\overline{\lambda} = \overline{0}$, and $w_0 \in V(\lambda)$ the vector in Proposition $\ref{prop: triv mod generator}$.
  Then, there exists $c \in \mathbb{Q}(q)^\times$ such that
  \[
    cw_0 \equiv_\infty v_\lambda.
  \]
\end{conj}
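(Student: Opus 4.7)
The plan is to show that, after normalizing so that $w_0 \in \mathcal{L}(\lambda) \setminus q^{-1}\mathcal{L}(\lambda)$, the vector $w_0$ satisfies $\tilde{E}_i w_0 \equiv_\infty 0$ for every $i \in I$. Granting this, Example~\ref{ex: characterization of the hwv at infty} immediately forces $w_0 \equiv_\infty c v_\lambda$ for some $c \in \mathbb{Q}$, and $c \neq 0$ by the normalization. When $i \in I_\bullet$ this is trivial: the isomorphism $\mathbf{U}^\imath w_0 \simeq V(0)$ implies $E_i w_0 = 0$, whence $\tilde{E}_i w_0 = 0$.

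The substantive case is $i \in I_\circ$. For this, I would localize to the real rank one component $I_i$ of Definition~\ref{def: related to adm pair}\ref{item: real rank 1 comp}, which by the local finiteness hypothesis is of finite type and carries an irreducible admissible pair structure of real rank one. Writing $\mathbf{U}_{I_i} \subseteq \mathbf{U}$ and $\mathbf{U}^\imath_{I_i} \subseteq \mathbf{U}^\imath$ for the associated subalgebras, Theorem~\ref{thm: comp red for fin} gives a decomposition of $V(\lambda)|_{\mathbf{U}_{I_i}}$ into irreducible $\mathbf{U}_{I_i}$-summands. Since $\mathbf{U}^\imath_{I_i} \subseteq \mathbf{U}_{I_i}$, the projection of $w_0$ onto each such summand is $\mathbf{U}^\imath_{I_i}$-equivariant, so it either vanishes or generates a trivial $\mathbf{U}^\imath_{I_i}$-submodule. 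Selecting the parameters ${\boldsymbol \varsigma}, {\boldsymbol \kappa}$ so that their restrictions to each $I_i$ match those used in \cite[Corollary 4.6.3]{W23}, the already-established real rank one case of the conjecture implies that each nonzero projection is congruent at $q=\infty$ to a scalar multiple of the highest weight vector of its summand.

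To finish, I would invoke the standard compatibility of the crystal lattice $\mathcal{L}(\lambda)$ with branching to $\mathbf{U}_{I_i}$: the lattice decomposes as a direct sum of $\mathbf{U}_{I_i}$-crystal lattices of the irreducible summands. Combined with the previous paragraph, this forces the class of $w_0$ in $\mathcal{L}(\lambda)/q^{-1}\mathcal{L}(\lambda)$ to be a $\mathbb{Q}$-linear combination of $\mathbf{U}_{I_i}$-highest weight classes, each of which is annihilated by $\tilde{E}_i$. Hence $\tilde{E}_i w_0 \equiv_\infty 0$, completing the proof.

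The main obstacle is the branching-compatibility step: although the decomposition of crystal lattices with respect to Levi-type subalgebras is classical, I need a version strong enough to guarantee that the $\mathbf{U}_{I_i}$-isotypic projection of a fixed vector in $\mathcal{L}(\lambda)$ again lies in $\mathcal{L}(\lambda)$ and reduces correctly onto the crystal lattice of each summand. I would approach this via a polarization argument based on the contragredient bilinear form of Proposition~\ref{prop: rho on Ui}, which by almost-orthonormality of the canonical basis makes orthogonal projection onto isotypic components preserve the lattice and respect the Kashiwara operators. A secondary difficulty is choosing the global parameters ${\boldsymbol \varsigma}, {\boldsymbol \kappa}$ so that their restrictions simultaneously satisfy the hypotheses of \cite{W23}; here the flexibility indicated in the remark after Corollary~\ref{cor: suff cond for Vlm containing triv submod} should suffice.
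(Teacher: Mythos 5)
Your proposal is correct and follows essentially the same route as the paper's Theorem~\ref{thm: proof of conj for loc fin}: reduce to showing $\tilde{E}_i w_0 \equiv_\infty 0$ for all $i$, dispose of $i \in I_\bullet$ trivially, and for $i \in I_\circ$ restrict to the real rank one subalgebra $\mathbf{U}_i$ of the component $I_i$, invoking the already-established real rank one case through branching of crystal bases (the paper packages precisely this step as Corollary~\ref{cor: characterization of triv mod at infty for irr fin rk 1}), then conclude via Example~\ref{ex: characterization of the hwv at infty}. The branching-compatibility of $\mathcal{L}(\lambda)$ that you flag as the main obstacle is a standard feature of Kashiwara's crystal base theory and is what underlies Corollary~\ref{cor: characterization of triv mod at infty for irr fin rk 1}, so your caution there is reasonable but not a genuine gap, and your observation that the parameters of Lemma~\ref{lem: proof of conj for irr fin rk 1} restrict coherently to each $I_i$ matches the paper's implicit handling.
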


\begin{rem}\normalfont
  If Conjecture \ref{conj: suff cond (strong)} is true, then Theorem \ref{thm: suff cond} implies the validity of Conjecture \ref{conj: stability}.
\end{rem}

\subsection{Irreducible finite types of real rank one}\label{subsect: proof of conj for irr fin rk 1}
In this subsection, we assume that the admissible pair $(I_\bullet, \tau)$ is irreducible, of finite type, and of real rank $1$ (see Definition \ref{def: related to adm pair}).
There are only eight types of such admissible pairs: $A\mathrm{I}_1$, $A\mathrm{II}_3$, $A\mathrm{III}_2$, $A\mathrm{IV}_n$, $B\mathrm{II}_n$, $C\mathrm{II}_{n}$, $D\mathrm{II}_{n}$, $F\mathrm{II}_4$.

Set
\[
  Y' := \sum_{i \in I} \mathbb{Z} h_i \subseteq Y, \quad X' := \operatorname{Hom}_\mathbb{Z}(Y' ,\mathbb{Z}).
\]
Then, there exists a group homomorphism
\[
  X \rightarrow X';\ \lambda \mapsto \lambda'
\]
defined by
\[
  \langle h', \lambda' \rangle = \langle h', \lambda \rangle \ \text{ for all } h' \in Y'.
\]
Set
\[
  \Pi' := \{ \alpha'_i \mid i \in I \}.
\]
Then, $(Y', X', \langle ,  \rangle, \Pi^\vee, \Pi')$ is a Satake datum of type $(I_\bullet, \tau)$.
Let $(\mathbf{U}', \mathbf{U}^{\imath \prime})$ denote the corresponding quantum symmetric pair.

For each $i \in I$, let $\varpi_i \in X'$ denote the fundamental weight:
\[
  \langle h_j, \varpi_i \rangle = \delta_{i,j} \ \text{ for all } j \in I.
\]

\begin{lem}\label{lem: proof of conj for irr fin rk 1}
  Conjecture $\ref{conj: suff cond (strong)}$ is true for $(\mathbf{U}', \mathbf{U}^{\imath \prime})$.
\end{lem}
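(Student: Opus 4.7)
The plan is to reduce the claim to a finite case-by-case check via a multiplicativity argument, and then to dispatch the base cases by citing the explicit computations of \cite{W23}. Set $L := \{ \lambda \in X^{\prime +} \mid \overline{\lambda} = \overline{0} \}$. First I would show that the property claimed in Conjecture \ref{conj: suff cond (strong)} is multiplicative on $L$: if it holds for $\lambda_1, \lambda_2 \in L$, witnessed by vectors $w_0^{(i)} \in V(\lambda_i)$ with $c_i w_0^{(i)} \equiv_\infty v_{\lambda_i}$ and $\mathbf{U}^{\imath \prime} w_0^{(i)} \simeq V(0)$, then using the coideal property $\Delta(\mathbf{U}^{\imath \prime}) \subseteq \mathbf{U}^{\imath \prime} \otimes \mathbf{U}'$ together with the counit identity $(\epsilon \otimes \mathrm{id})\Delta = \mathrm{id}$ on $\mathbf{U}'$, one computes
\[
  x \cdot (w_0^{(1)} \otimes w_0^{(2)}) = \epsilon(x)\,(w_0^{(1)} \otimes w_0^{(2)}) \quad \text{for all } x \in \mathbf{U}^{\imath \prime},
\]
so that $w_0^{(1)} \otimes w_0^{(2)}$ spans a trivial $\mathbf{U}^{\imath \prime}$-submodule of $V(\lambda_1) \otimes V(\lambda_2)$; clearly also $c_1 c_2 (w_0^{(1)} \otimes w_0^{(2)}) \equiv_\infty v_{\lambda_1} \otimes v_{\lambda_2}$. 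Composing with the canonical $\mathbf{U}'$-equivariant, hence $\mathbf{U}^{\imath \prime}$-equivariant, projection $V(\lambda_1) \otimes V(\lambda_2) \twoheadrightarrow V(\lambda_1 + \lambda_2)$, which is compatible with crystal lattices and sends $v_{\lambda_1} \otimes v_{\lambda_2}$ to $v_{\lambda_1 + \lambda_2}$, produces the required vector for $\lambda_1 + \lambda_2$.

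By this multiplicativity, it suffices to verify the conjecture on a set of monoid generators of $L$. Next I would enumerate these generators case by case for each of the eight admissible pairs $A\mathrm{I}_1$, $A\mathrm{II}_3$, $A\mathrm{III}_2$, $A\mathrm{IV}_n$, $B\mathrm{II}_n$, $C\mathrm{II}_n$, $D\mathrm{II}_n$, $F\mathrm{II}_4$. In each case $L$ has at most a couple of generators, expressible in terms of fundamental weights of $X'$, because the real rank is $1$; and for each such generator $\lambda$ the module $V(\lambda)$ has small, explicitly computable dimension. The explicit calculations of \cite[Section 4]{W23} then exhibit a trivial $\mathbf{U}^{\imath \prime}$-vector $w_0 \in V(\lambda)$ whose image in $\mathcal{L}(\lambda)/q^{-1}\mathcal{L}(\lambda)$ is, up to a nonzero scalar, that of $v_\lambda$, provided the parameters $\boldsymbol{\varsigma}$ and $\boldsymbol{\kappa}$ are chosen as prescribed there.

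The main obstacle is the base-case verification, which depends on a parameter choice tailored to each type and is precisely the technical content of \cite{W23}; since there are infinitely many types in general, this is feasible only thanks to the finite classification available in the present setting. What is added here beyond that reference is the multiplicativity argument of the first paragraph, which lifts the base-case result from the generators of $L$ to all of $L$ and hence yields the strong form of the conjecture for the minimal root datum $(Y', X')$.
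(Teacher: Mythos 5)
Your argument is correct, and it largely parallels the paper's, with one genuine addition. The paper's proof is shorter: after fixing the same parameters as in \cite[Section 4]{W23} and observing that $\{\lambda \in X'^+ \mid \overline{\lambda} = \overline{0}\} = \mathbb{Z}_{\geq 0}\varpi$ for the single weight $\varpi$ listed there, it cites \cite[Proposition~4.6.1]{W23} outright --- that proposition already establishes the required statement for every $n\varpi$, $n \in \mathbb{Z}_{\geq 0}$, so no further reduction is needed. Your multiplicativity lemma, by contrast, only requires the base case $\lambda = \varpi$ from \cite{W23} and lifts it to all of $\mathbb{Z}_{\geq 0}\varpi$; this is a legitimate and self-contained alternative, and the computation $x\cdot(w_0^{(1)} \otimes w_0^{(2)}) = w_0^{(1)} \otimes x w_0^{(2)} = \epsilon(x)\,w_0^{(1)} \otimes w_0^{(2)}$ via the coideal property and the counit axiom is correct. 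Two small points to make fully rigorous: (i) the projection $V(\lambda_1)\otimes V(\lambda_2) \twoheadrightarrow V(\lambda_1+\lambda_2)$ is only canonical because $I$ is of finite type, so the tensor product is completely reducible with a canonical isotypic decomposition and the $V(\lambda_1+\lambda_2)$-component has multiplicity one --- this deserves a sentence; (ii) the compatibility of that projection with crystal lattices (i.e.\ $\mathcal{L}(\lambda_1)\otimes\mathcal{L}(\lambda_2)$ decomposes as a direct sum of $\mathcal{L}(\nu)$'s along the isotypic decomposition) is standard Kashiwara theory but should be cited. Finally, you should invoke Proposition~\ref{prop: triv mod generator} to identify the image of $w_0^{(1)}\otimes w_0^{(2)}$ with \emph{the} unique trivial generator of $V(\lambda_1+\lambda_2)$, since Conjecture~\ref{conj: suff cond (strong)} is a statement about that specific vector. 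With those clarifications, your route is a valid variant that buys a weaker dependence on \cite{W23}, at the cost of one extra (but easy) lemma.
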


\begin{proof}
  Let us fix a total order $\preceq$ on $I_\circ$.
  For each $i \in I_\circ$, set $\kappa_i = 0$ and 
  \[
    \varsigma_i = \begin{cases}
      q^{-1} & \text{ if $(I_\bullet, \tau)$ is of type $A\mathrm{I}_1$}, \\
      q & \text{ if $(I_\bullet, \tau)$ is of type $A\mathrm{II}_3$}, \\
      1 & \text{ if $(I_\bullet, \tau)$ is of type $A\mathrm{III}_2$}, \\
      1 & \text{ if $(I_\bullet, \tau)$ is of type $A\mathrm{IV}_n$ and $i \prec \tau(i)$}, \\
      (-1)^n q^{n-1} & \text{ if $(I_\bullet, \tau)$ is of type $A\mathrm{IV}_n$ and $i \succ \tau(i)$}, \\
      q^{2n-3} & \text{ if $(I_\bullet, \tau)$ is of type $B\mathrm{II}_n$}, \\
      q^{n-1} & \text{ if $(I_\bullet, \tau)$ is of type $C\mathrm{II}_n$}, \\
      q^{n-2} & \text{ if $(I_\bullet, \tau)$ is of type $D\mathrm{II}_n$}, \\
      q^5 & \text{ if $(I_\bullet, \tau)$ is of type $F\mathrm{II}_4$}.
    \end{cases}
  \]
  Note that the parameters ${\boldsymbol \varsigma}$ and ${\boldsymbol \kappa}$ are the same as in \cite[Section 4]{W23}.

  Let $\varpi \in X^+$ be as in \cite[Section 4.6]{W23}:
  \[
    \varpi := \begin{cases}
      2\varpi_i & \text{ if $(I_\bullet, \tau)$ is of type $A\mathrm{I}_1$}, \\
      \varpi_i+\varpi_{\tau(i)} & \text{ if $(I_\bullet, \tau)$ is either of type $A\mathrm{III}_2$ or $A\mathrm{IV}_n$}, \\
      \varpi_i & \text{ if $(I_\bullet, \tau)$ is either of type $A\mathrm{II}_3$, $B\mathrm{II}_n$, $C\mathrm{II}_n$, $D\mathrm{II}_n$, or $F\mathrm{II}_n$},
    \end{cases}
  \]
  where $i$ is an element of $I_\circ$.
  Then, we see that
  \[
    \{ \lambda \in X^+ \mid \overline{\lambda} = \overline{0} \} = \mathbb{Z}_{\geq 0} \varpi.
  \]
  Now, the assertion follows from \cite[Proposition 4.6.1]{W23}.
\end{proof}

\begin{prop}\label{prop: proof of conj for irr fin rk 1 general}
  Conjecture \ref{conj: suff cond (strong)} is true for $(\mathbf{U}, \mathbf{U}^\imath)$.
\end{prop}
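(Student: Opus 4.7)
The plan is to reduce to the ``small'' setting $(\mathbf{U}', \mathbf{U}^{\imath \prime})$ already handled by Lemma~\ref{lem: proof of conj for irr fin rk 1}, via the natural projection $X \to X'$. The key observation is that changing the root datum from $(Y,X)$ to $(Y',X')$ only affects the generators $K_h$ for $h \in Y \setminus Y'$: the generators $E_i$, $F_i$, $B_i$, the Kashiwara operators, the canonical basis $\mathbf{B}(\lambda)$, the crystal lattice $\mathcal{L}(\lambda)$, and the highest weight vector $v_\lambda$ are all insensitive to this change. So one can freely transport the conclusion of Lemma~\ref{lem: proof of conj for irr fin rk 1} from $V(\lambda')$ back to $V(\lambda)$.

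Concretely, take the same parameters ${\boldsymbol \varsigma}$ and ${\boldsymbol \kappa}$ as in Lemma~\ref{lem: proof of conj for irr fin rk 1}; since the constraints \eqref{eq: constr for paramas of qsp 1}--\eqref{eq: constr for paramas of qsp 6} depend only on the admissible pair, they continue to hold for $(\mathbf{U}, \mathbf{U}^\imath)$. Given $\lambda \in X^+$ with $\overline{\lambda} = \overline{0}$ in $X^\imath$, write $\lambda = \sigma + w_\bullet \tau(\sigma)$ for some $\sigma \in X$; projecting to $X'$ shows that the image $\lambda' \in (X')^+$ satisfies $\overline{\lambda'} = \overline{0}$ in $X^{\imath \prime}$. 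Moreover, $V(\lambda)$ viewed as a $\mathbf{U}'$-module (and hence as a $\mathbf{U}^{\imath \prime}$-module) coincides with $V(\lambda')$, sharing the same underlying vector space, highest weight vector $v_\lambda = v_{\lambda'}$, and crystal lattice $\mathcal{L}(\lambda) = \mathcal{L}(\lambda')$. Applying Lemma~\ref{lem: proof of conj for irr fin rk 1} to $\lambda'$ therefore yields a vector $w_0' \in V(\lambda)$ generating a trivial $\mathbf{U}^{\imath \prime}$-submodule together with a scalar $c' \in \mathbb{Q}(q)^\times$ such that $c' w_0' \equiv_\infty v_\lambda$.

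It remains to identify $w_0'$ with the $\mathbf{U}^\imath$-trivial generator $w_0$ furnished by Proposition~\ref{prop: triv mod generator}. Since $Y^{\imath \prime} \subseteq Y^\imath$ one has $\mathbf{U}^{\imath \prime} \subseteq \mathbf{U}^\imath$, so every $\mathbf{U}^\imath$-trivial vector is automatically $\mathbf{U}^{\imath \prime}$-trivial, giving $\mathbb{Q}(q)\, w_0 \subseteq \mathbb{Q}(q)\, w_0'$. Proposition~\ref{prop: triv mod generator}, applied in both settings, makes both of these subspaces one-dimensional; hence they coincide, giving $w_0 = c\, w_0'$ for some $c \in \mathbb{Q}(q)^\times$, and therefore $(c'/c)\, w_0 \equiv_\infty v_\lambda$, as required. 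The one point requiring verification---but not a serious obstacle---is that Proposition~\ref{prop: triv mod generator} and the complete reducibility of Corollary~\ref{cor: V(lm) as Ui-mod is comp red} apply verbatim to $(\mathbf{U}', \mathbf{U}^{\imath \prime})$; their proofs depend only on the admissible pair and the form of $B_i$, so the argument carries over unchanged.
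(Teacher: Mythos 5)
Your proposal is correct and follows essentially the same route as the paper: fix the parameters from Lemma~\ref{lem: proof of conj for irr fin rk 1}, pass from the Satake datum $(Y,X)$ to $(Y',X')$, observe that $V(\lambda)$ restricted along $\mathbf{U}'\hookrightarrow\mathbf{U}$ is $V(\lambda')$, that $w_0$ remains a trivial vector for $\mathbf{U}^{\imath\prime}\subseteq\mathbf{U}^\imath$, and invoke the uniqueness from Proposition~\ref{prop: triv mod generator} to identify it with the vector handled by Lemma~\ref{lem: proof of conj for irr fin rk 1}. You make explicit the inclusion $\mathbf{U}^{\imath\prime}\subseteq\mathbf{U}^\imath$ and the uniqueness step, both of which the paper leaves implicit, but the underlying argument is the same.
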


\begin{proof}
  Set the parameters ${\boldsymbol \varsigma}$ and ${\boldsymbol \kappa}$ is in the proof of Lemma \ref{lem: proof of conj for irr fin rk 1}.

  Since $\overline{\lambda} = \overline{0}$, there exists $\nu \in X$ such that
  \[
    \lambda = \nu + w_\bullet \tau(\nu).
  \]
  Then, we obtain
  \[
    \lambda' = \nu' + w_\bullet \tau(\nu').
  \]

  Let us regard $V(\lambda)$ as a $\mathbf{U}'$-module via the canonical algebra homomorphism $\mathbf{U}' \rightarrow \mathbf{U}$.
  Then, we see that it is isomorphic to $V(\lambda')$ and that $\mathbf{U}^{\imath \prime} w_0$ is a $\mathbf{U}^{\imath \prime}$-submodule of $V(\lambda')$ isomorphic to the trivial module.
  Now, the assertion follows from Lemma \ref{lem: proof of conj for irr fin rk 1}.
\end{proof}

\begin{cor}\label{cor: characterization of triv mod at infty for irr fin rk 1}
  Let $M$ be an integrable $\mathbf{U}$-module with a crystal base $(\mathcal{L}, \mathcal{B})$.
  Let $m \in \mathcal{L}$ be such that $\mathbf{U}^\imath m \simeq V(0)$.
  Then, we have $m \in \bigoplus_{\substack{\lambda \in X \\ \overline{\lambda} = \overline{0}}} M_{\lambda}$, and
  \[
    \tilde{E}_i m \equiv_\infty 0 \ \text{ for all } i \in I.
  \]
\end{cor}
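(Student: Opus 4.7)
The plan is to combine complete reducibility of integrable $\mathbf{U}$-modules in finite type (Theorem \ref{thm: comp red for fin}) with Proposition \ref{prop: proof of conj for irr fin rk 1 general} in order to pin down the form of $m$. Since the Cartan datum $I$ is of finite type in this subsection, I would first decompose $M \simeq \bigoplus_\lambda V(\lambda)^{\oplus n_\lambda}$ as a $\mathbf{U}$-module; the crystal base $(\mathcal{L}, \mathcal{B})$ splits compatibly along the connected components of $\mathcal{B}$, yielding $\mathcal{L} = \bigoplus_{\lambda, k} \mathcal{L}(\lambda)^{(k)}$, where the superscript $(k)$ indexes the $k$-th copy of $V(\lambda)$.

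For the weight assertion, I would observe that $\mathbf{U}^\imath m \simeq V(0)$ forces $K_h m = m$ for every $h \in Y^\imath$, so $m$ lies in the $\mathbf{U}^\imath$-weight space of weight $\overline{0}$, which by construction equals $\bigoplus_{\overline{\lambda} = \overline{0}} M_\lambda$. For the Kashiwara operator assertion, I would invoke Proposition \ref{prop: triv mod generator}, which says that the space of trivial $\mathbf{U}^\imath$-vectors in each irreducible $\mathbf{U}$-component $V(\lambda)^{(k)}$ is $1$-dimensional if $\overline{\lambda} = \overline{0}$ and zero otherwise. Since the condition of generating a trivial $\mathbf{U}^\imath$-submodule is linear, this yields a decomposition $m = \sum_{\overline{\lambda} = \overline{0}} \sum_k \alpha_{\lambda, k} w_0^{\lambda, k}$ with $\alpha_{\lambda, k} \in \mathbb{Q}(q)$, where $w_0^{\lambda, k}$ is the trivial generator in the $k$-th copy of $V(\lambda)$. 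I would then normalize each $w_0^{\lambda, k}$ via Proposition \ref{prop: proof of conj for irr fin rk 1 general} so that $w_0^{\lambda, k} \in \mathcal{L}(\lambda)^{(k)}$ and $w_0^{\lambda, k} \equiv_\infty v_\lambda^{(k)}$.

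Because $v_\lambda^{(k)}$ represents a crystal basis element of $\mathcal{L}(\lambda)^{(k)}/q^{-1}\mathcal{L}(\lambda)^{(k)}$, this normalization makes $w_0^{\lambda, k}$ a unimodular element of the free $\mathbf{A}_\infty$-module $\mathcal{L}(\lambda)^{(k)}$, so the assumption $m \in \mathcal{L}$ forces $\alpha_{\lambda, k} \in \mathbf{A}_\infty$. Using that $\tilde{E}_i$ is $\mathbb{Q}(q)$-linear and preserves both $\mathcal{L}$ and $q^{-1}\mathcal{L}$, one computes
\[
  \tilde{E}_i m = \sum_{\lambda, k} \alpha_{\lambda, k} \tilde{E}_i w_0^{\lambda, k} \equiv_\infty \sum_{\lambda, k} \alpha_{\lambda, k} \tilde{E}_i v_\lambda^{(k)} = 0,
\]
where the final equality uses that $v_\lambda^{(k)}$ is a highest weight vector and is therefore killed by $\tilde{E}_i$. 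The main technical point is the implication $m \in \mathcal{L} \Rightarrow \alpha_{\lambda, k} \in \mathbf{A}_\infty$: this is precisely where the normalization provided by Proposition \ref{prop: proof of conj for irr fin rk 1 general} is crucial, because without the alignment of the trivial generator with the highest weight vector at $q = \infty$, the reduction modulo $q^{-1}\mathcal{L}$ would lose control over the leading terms.
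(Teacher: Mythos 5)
Your strategy is essentially the paper's: decompose $M$ into irreducible $\mathbf{U}$-components compatibly with the given crystal lattice, apply Propositions \ref{prop: triv mod generator} and \ref{prop: proof of conj for irr fin rk 1 general} component-by-component, and finish via Example \ref{ex: characterization of the hwv at infty}; the observation that $m\in\mathcal{L}$ forces $\alpha_{\lambda,k}\in\mathbf{A}_\infty$ is exactly the detail that makes the paper's ``immediate consequence'' go through, and it is good that you made it explicit. One step is flawed, though harmlessly so: the claim that $K_h m=m$ for all $h\in Y^\imath$ already places $m$ in $\bigoplus_{\overline\lambda=\overline0}M_\lambda$ does not hold in general, since the induced pairing $Y^\imath\times X^\imath\to\mathbb{Z}$ is explicitly not assumed perfect in the paper, and $K_h m=m$ only confines $m$ to $\bigoplus_{\lambda\,:\,\langle h,\lambda\rangle=0\ \forall h\in Y^\imath}M_\lambda$, which can be strictly larger than $\bigoplus_{\overline\lambda=\overline0}M_\lambda$. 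This is moot, however: your subsequent decomposition $m=\sum_{\overline\lambda=\overline0}\sum_k\alpha_{\lambda,k}w_0^{\lambda,k}$, obtained by projecting onto the irreducible $\mathbf{U}$-summands and invoking Proposition \ref{prop: triv mod generator} (components with $\overline\lambda\neq\overline0$ cannot support a trivial $\mathbf{U}^\imath$-vector and so must vanish), already yields the weight assertion directly; you should simply delete the $K_h$ shortcut.
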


\begin{proof}
  By Theorem \ref{thm: comp red for fin}, the integrable $\mathbf{U}$-module $M$ is completely reducible with irreducible components of the form $V(\lambda)$ with various $\lambda \in X^+$.
  Hence, the assertion is an immediate consequence of Proposition \ref{prop: proof of conj for irr fin rk 1 general} (see Example \ref{ex: characterization of the hwv at infty}).
\end{proof}

\subsection{Locally finite types}\label{subsect: prf loc fin}
For each $i \in I_\circ$ recall the subdatum $I_i \subseteq I$ from Definition \ref{def: related to adm pair}, and set
\[
  \Pi_i := \{ h_j \mid j \in I_i \}, \quad \Pi^\vee_i := \{ \alpha_j \mid j \in I_i \}.
\]
Then, $(Y, X, \langle ,  \rangle, \Pi_i, \Pi^\vee_i)$ is a Satake datum of type $(I_{i, \bullet}, \tau|_{I_i})$.
Let $(\mathbf{U}_i, \mathbf{U}^\imath_i)$ denote the corresponding quantum symmetric pair.
We regard $\mathbf{U}_i$ as a subalgebra of $\mathbf{U}$ in the canonical way.

\begin{thm}\label{thm: proof of conj for loc fin}
  Suppose that the admissible pair is of locally finite type.
  Then, Conjecture $\ref{conj: suff cond (strong)}$ is true.
\end{thm}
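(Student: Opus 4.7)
The plan is to reduce to the irreducible, finite, real-rank-one case already handled in Proposition~\ref{prop: proof of conj for irr fin rk 1 general}, applied component by component via the real-rank-one pieces $I_i$ for $i \in I_\circ$, and then to assemble the local information using the crystal-theoretic characterization of highest-weight vectors in Example~\ref{ex: characterization of the hwv at infty}.

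First I would specify the parameters by setting $\kappa_i := 0$ for all $i \in I_\circ$ and taking $\varsigma_i$ to be the value prescribed in the proof of Lemma~\ref{lem: proof of conj for irr fin rk 1} for the real-rank-one component $I_i$ containing $i$. Since each of the constraints \eqref{eq: constr for paramas of qsp 1}--\eqref{eq: constr for paramas of qsp 6} depends only on data attached to $I_i$ (contributions from simple roots in $I_\bullet$ disconnected from $i$ and $\tau(i)$ vanish in the pairings and cancel in $w_\bullet\alpha_{\tau(i)}$), this local-to-global assignment is consistent, and the generator $B_j$ of $\mathbf{U}^\imath$ agrees with the corresponding generator inside $\mathbf{U}^\imath_i$ for each $j \in (I_i)_\circ$; in particular $\mathbf{U}^\imath_i \subseteq \mathbf{U}^\imath$.

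Next, fix $\lambda \in X^+$ with $\overline{\lambda} = \overline{0}$ and let $w_0 \in V(\lambda)$ be the vector from Proposition~\ref{prop: triv mod generator}, normalized so that $w_0 \in \mathcal{L}(\lambda) \setminus q^{-1}\mathcal{L}(\lambda)$. I would prove that $\tilde{E}_i w_0 \equiv_\infty 0$ for every $i \in I$. For $i \in I_\bullet$ this is immediate: $E_i \in \mathbf{U}^\imath$ annihilates the trivial module, so $E_i w_0 = 0$ and hence $\tilde{E}_i w_0 = 0$. For $i \in I_\circ$, the $\mathbf{U}^\imath_i$-submodule $\mathbf{U}^\imath_i w_0$ of $\mathbf{U}^\imath w_0 \simeq V(0)$ is nonzero and all its generators $B_j$, $E_k$, $K_h$ act on $w_0$ as on the trivial module, so $\mathbf{U}^\imath_i w_0 \simeq V(0)$ as a $\mathbf{U}^\imath_i$-module. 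Because $(I_{i,\bullet}, \tau|_{I_i})$ is irreducible, of finite type, and of real rank one, with precisely the parameters used in Lemma~\ref{lem: proof of conj for irr fin rk 1}, Corollary~\ref{cor: characterization of triv mod at infty for irr fin rk 1} applied to $V(\lambda)$ viewed as an integrable $\mathbf{U}_i$-module (whose crystal lattice may still be taken as $\mathcal{L}(\lambda)$, by the standard compatibility of Kashiwara's crystal lattices with restriction to subdata) yields $\tilde{E}_j w_0 \equiv_\infty 0$ for every $j \in I_i$, in particular for $j = i$. Combining the two cases, $\tilde{E}_i w_0 \equiv_\infty 0$ for all $i \in I$, so Example~\ref{ex: characterization of the hwv at infty} produces $c \in \mathbb{Q}$ with $w_0 \equiv_\infty c v_\lambda$, and the normalization of $w_0$ forces $c \neq 0$. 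Setting $c' := c^{-1}$ gives $c' w_0 \equiv_\infty v_\lambda$, establishing Conjecture~\ref{conj: suff cond (strong)}.

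The step I expect to be the main obstacle is the parameter-compatibility verification underlying $\mathbf{U}^\imath_i \subseteq \mathbf{U}^\imath$: one must check that the generator $B_j$ of $\mathbf{U}^\imath$ (defined using $T_{w_\bullet}$) coincides with the generator of $\mathbf{U}^\imath_i$ (defined using the longest element of the Weyl group of $I_{i,\bullet}$), which reduces to showing that the braid factors of $T_{w_\bullet}$ coming from connected components of $I_\bullet$ disjoint from $I_i$ act trivially on $E_{\tau(j)}$. Once this identification and the rank-one parameter choice are in hand, the rest of the argument is a clean assembly of the already-established rank-one result (Proposition~\ref{prop: proof of conj for irr fin rk 1 general} and Corollary~\ref{cor: characterization of triv mod at infty for irr fin rk 1}) with standard crystal base facts.
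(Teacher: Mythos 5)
Your argument is correct and follows the same route as the paper: normalize $w_0$, dispose of $i \in I_\bullet$ directly, reduce the case $i \in I_\circ$ to the irreducible finite real-rank-one result (Corollary \ref{cor: characterization of triv mod at infty for irr fin rk 1}) applied to $V(\lambda)$ viewed as an integrable $\mathbf{U}_i$-module, and conclude via Example \ref{ex: characterization of the hwv at infty}. The parameter-compatibility and $\mathbf{U}^\imath_i \subseteq \mathbf{U}^\imath$ verifications you flag as the ``main obstacle'' are genuine hypotheses the paper leaves implicit, but they are routine: $J := I_\bullet \setminus I_{i,\bullet}$ is disconnected from both $I_{i,\bullet}$ and $\tau(i)$ by the definition of $I_i$, so $w_\bullet = w_{I_{i,\bullet}} w_J$ and $T_{w_J}$ fixes $E_{\tau(i)}$, whence the two definitions of $B_i$ coincide.
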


\begin{proof}
  Set the parameters ${\boldsymbol \varsigma}$ and ${\boldsymbol \kappa}$ as in the proof of Lemma \ref{lem: proof of conj for irr fin rk 1}.

  Without loss of generality, we may assume that $w_0 \in \mathcal{L}(\lambda) \setminus q^{-1} \mathcal{L}(\lambda)$.

  First, suppose that $i \in I_\bullet$.
  Then, we have
  \[
    E_i w_0 = F_i w_0 = 0.
  \]
  This implies that
  \[
    \tilde{E}_i w_0 = 0.
  \]

  Next, suppose that $i \in I_\circ$.
  Since the $\mathbf{U}$-module $V(\lambda)$, as a $\mathbf{U}_i$-module, is integrable, we may apply Corollary \ref{cor: characterization of triv mod at infty for irr fin rk 1} to obtain
  \[
    \tilde{E}_i w_0 \equiv_\infty 0.
  \]

  Now, the assertion follows from Example \ref{ex: characterization of the hwv at infty}.
\end{proof}

\end{document}